\documentclass[11pt,oneside,reqno]{article}
\usepackage[margin=1.3in]{geometry}
\usepackage[title]{appendix}
\usepackage{titlesec}
\usepackage{amsmath,amsthm,amssymb,color,bm}
\usepackage{mathtools}
\usepackage[authoryear,round]{natbib}
\usepackage{booktabs}
\usepackage{needspace}
\usepackage{doi}

\newcommand{\wt}{\widetilde}

\usepackage{tikz}
\RequirePackage{amsfonts,mathrsfs,dsfont,mathtools,thmtools}
\RequirePackage{hyperref}
\hypersetup{colorlinks=true,citecolor=blue,urlcolor=blue,linkcolor=blue}
\usepackage[capitalize]{cleveref}
\RequirePackage{graphicx}

\crefname{equation}{}{}
\crefformat{equation}{\textup{(#2#1#3)}}
\crefrangeformat{equation}{\textup{(#3#1#4)--(#5#2#6)}}
\crefdefaultlabelformat{#2\textup{#1}#3}
\crefname{lemma}{Lemma}{Lemmas}
\crefname{conj}{Conjecture}{Conjectures}
\crefname{page}{p.}{pp.}
\usepackage[normalem]{ulem}

\usepackage{bbm}
\usepackage{enumitem}
\usetikzlibrary{arrows, automata}
\usetikzlibrary{calc}
\usetikzlibrary{positioning}
\usepackage{cancel}

\numberwithin{equation}{section}
\allowdisplaybreaks[4]

\theoremstyle{plain}
\newtheorem{theorem}{Theorem}[section]

\newtheorem{proposition}{Proposition}[section]
\newtheorem{lemma}{Lemma}[section]
\newtheorem{corollary}{Corollary}[section]

\theoremstyle{definition}

\newtheorem{example}{Example}[section]
\newtheorem{remark}{Remark}[section]

\newtheorem{conj}{Conjecture}[section]

\makeatletter

\newcount\minute
\newcount\hour
\newcount\hourMins
\def\now{%
\minute=\time%
\hour=\time \divide \hour by 60%
\hourMins=\hour \multiply\hourMins by 60%
\advance\minute by -\hourMins%
\zeroPadTwo{\the\hour}:\zeroPadTwo{\the\minute}%
}
\def\zeroPadTwo#1{\ifnum #1<10 0\fi#1}

\renewcommand{\cite}{\citet}

\def\^#1{\ifmmode {\mathaccent"705E #1} \else {\accent94 #1} \fi}
\def\~#1{\ifmmode {\mathaccent"707E #1} \else {\accent"7E #1} \fi}

\edef\-#1{\noexpand\ifmmode {\noexpand\bar{#1}} \noexpand\else \-#1\noexpand\fi}
\def\>#1{\vec{#1}}
\def\.#1{\dot{#1}}

\def\atop{\@@atop}
\def\*#1{\mathscr{#1}}

\renewcommand{\leq}{\leqslant}
\renewcommand{\le}{\leqslant}
\renewcommand{\geq}{\geqslant}
\renewcommand{\ge}{\geqslant}

\newcommand{\Prob}{\mathbb P}

\newcommand{\ind}{\mathbf 1}

\newcommand{\diag}{{\mathop{\mathrm{diag}}}}

\newcommand{\Cov}{\mathop{\mathrm{Cov}}}

\newcommand{\E}{\mathbb{E}}
\newcommand{\R}{\mathbb{R}}
\DeclareMathOperator{\tr}{tr}

\DeclareMathOperator{\dist}{dist}

\newcommand{\IR}{\mathbb{R}}

\def\be#1{\begin{equation*}#1\end{equation*}}
\def\ben#1{\begin{equation}#1\end{equation}}
\def\bes#1{\begin{equation*}\begin{split}#1\end{split}\end{equation*}}
\def\besn#1{\begin{equation}\begin{split}#1\end{split}\end{equation}}

\def\mid{\vert}

\def\beqn#1\eeqn{\begin{align}#1\end{align}}
\def\beq#1\eeq{\begin{align*}#1\end{align*}}

\usepackage{latexsym}
\usepackage{amscd}
\usepackage{epsf}

\newcommand{\mcl}[1]{\mathcal{#1}}

\renewcommand\section{\@startsection {section}{1}{\z@}%
{-3.5ex \@plus -1ex \@minus -.2ex}%
{1.3ex \@plus.2ex}%
{\center\small\sc\mathversion{bold}}}

\def\subsection#1{\@startsection {subsection}{2}{0pt}%
{-3.5ex \@plus -1ex \@minus -.2ex}%
{1ex \@plus.2ex}%
{\bf\mathversion{bold}}{#1}}

\def\subsubsection#1{\@startsection{subsubsection}{3}{0pt}%
{\medskipamount}%
{-10pt}%
{\normalsize\itshape}{\kern-2.2ex. #1.}}

\def\blfootnote{\xdef\@thefnmark{}\@footnotetext}

\makeatother

\newcommand{\pathrevisioncolor}{%
  \color{red}\hypersetup{linkcolor=red,citecolor=red,urlcolor=red}}

\makeatletter
\@namedef{path@newbib@PathKMT1975}{}
\@namedef{path@newbib@PathKMT1976}{}
\@namedef{path@newbib@PathChatterjee2012}{}
\@namedef{path@newbib@PathBG2016}{}
\@namedef{path@newbib@PathKrishnapur2020}{}
\let\path@original@bibitem\bibitem
\renewcommand{\bibitem}[2][]{%
  \par
  \ifcsname path@newbib@#2\endcsname
    \pathrevisioncolor
  \else
    \normalcolor\hypersetup{linkcolor=blue,citecolor=blue,urlcolor=blue}%
  \fi
  \path@original@bibitem[#1]{#2}}
\makeatother

\begin{document}
\author{Xiao Fang, Zi-Yao Su, Jiaxue Wang, Xiaolin Wang}
\date{\it The Chinese University of Hong Kong \\[2ex]  }
\title{Sharp Fourth-Moment Wasserstein CLT Bounds}
\maketitle
\begin{abstract}
    We prove that for the normalized sum $W=\sum_{i=1}^n X_i/\sqrt{n}$ of independent, mean-zero, $d$-dimensional random vectors with $\Cov(W) =\Sigma=\diag(\sigma_1^2,\dots, \sigma_d^2)$ (the diagonal form of the covariance matrix is assumed without loss of generality), the 2-Wasserstein distance from $W$ to $N(0,\Sigma)$ is bounded by
\[\sqrt{2\sum_{j=1}^d \min \left\{\sigma_j^2, \frac{\sum_{i=1}^n \E [|X_i'-X_i|^2(X'_{ij}-X_{ij})^2 ] }{n^2 \sigma_j^2}  \right\}},\]
where $|\cdot|$ denotes the Euclidean norm, $X_i'$ is an independent copy of $X_i$, and $X_{ij}$ ($X_{ij}'$ resp.) denotes the $j$th coordinate of the vector $X_i$ ($X_i'$ resp.).
On the one hand, it gives the optimal rate $O(\beta\sqrt{d/n})$ if $|X_i|\leq \beta$ for all $i$.
On the other hand, in the isotropic case where $\Sigma=I_d$, it gives the optimal fourth-moment bound $O(\sqrt{\sum_{i=1}^n \E |X_i|^4/n^2})$. Moreover, the coordinatewise minima suggest a tradeoff between the na\"ive coupling and the normal coupling for each component.

To prove the main result, we develop a new transportation inequality using a coupling via the F\"ollmer process with a general covariance matrix and the martingale property of its drift. The resulting bound is then controlled using a Bonis-type expansion together with Bessel's inequality.
The method and the result extend to $p$-Wasserstein bounds. As a consequence, we obtain the Hilbert space 4-Wasserstein CLT bound \( O(n^{-1/4}[\tr(Q^{1/2})]^{1/2}) \), where, in the i.i.d.\ case,
$Q=\E[|X_1'-X_1|^2(X_1'-X_1)^{\otimes2}]$ and the $O(n^{-1/4})$ rate is optimal given the dimension-free form of $Q$. We further strengthen it to a dimension-free strong approximation result with a near-optimal error rate through the KMT embedding construction.
\end{abstract}


\section{Introduction}
Multivariate central limit theorems (CLTs) with non-asymptotic error bounds have attracted great interest in the recent literature; see, for example, \cite{Bonis2020}, \cite{chernozhuokov2022improved}, and \cite{cai2026improved}.
However, even in the case of sums of independent and identically distributed (i.i.d.) random vectors, many fundamental questions remain unsolved. Compared with the one-dimensional CLT, the complication in the multivariate CLT comes from aiming for optimal dependence on dimension and on the covariance matrix.

We focus on Wasserstein bounds in the CLT.
For two probability measures $\mu$ and $\nu$ on $\mathbb{R}^d$ with finite $p$th moments, their $p$-Wasserstein distance, $p\geq 1$, is defined by
\be{
\mcl W_{p}(\mu,\nu) 
:= \inf_{\pi \in \Pi(\mu,\nu)} 
\Biggl( 
\int_{\mathbb{R}^d \times \mathbb{R}^d} 
|x-y|^p \, d\pi(x,y) 
\Biggr)^{1/p},
}
where $|\cdot|$ denotes the Euclidean norm and $\Pi(\mu,\nu)$ denotes the collection of all couplings of $\mu$ and $\nu$. In this paper, we develop bounds in terms of fourth moments that recover the optimal rates in the bounded and isotropic settings and also lead to dimension-free estimates.

The basic setting is as follows. Let $\{X_1, \dots, X_n\}$ be a sequence of independent mean-zero random vectors in $\IR^d$. Let $W_n=(X_1+\dots+X_n)/\sqrt{n}$, $\Sigma=\Cov(W_n)$ and $N(0,\Sigma)$ be the $d$-dimensional normal distribution with mean zero and covariance matrix $\Sigma$. 

In the i.i.d.\ setting and under the boundedness condition
$|X_i|\leq\beta$,
\cite{Zhai2018} obtained a 2-Wasserstein bound of order
$O(\beta\sqrt{d/n}(1+\log n))$.
This was improved by \cite{eldan2020clt} to
\ben{\label{eq:Zhai18}
\mathcal{W}_2(\mathcal{L}(W_n), N(0,\Sigma))\leq  \frac{C\beta\sqrt{d (1+\log n)}}{\sqrt{n}},
}
where $C$ is a universal constant. The bound is optimal up to the logarithmic factor, as shown by the counterexample ($\beta=\sqrt{d}$ in the example) provided in \cite[Proposition~1.2]{Zhai2018}. 

A complementary problem arises when boundedness is replaced by a fourth-moment assumption. \cite{Bonis2020} obtained a fourth-moment 2-Wasserstein bound in the isotropic case $\Sigma=I_d$, which, in the i.i.d.\ setting, gives
\ben{\label{eq:bonisbound}
\mathcal{W}_2(\mathcal{L}(W_n), N(0,I_d))\leq \frac{14d^{1/4}}{\sqrt n}
\left\|
\E\left[X_1X_1^\top|X_1|^2\right]
\right\|_{\mathrm{HS}}^{1/2},
}
where $\cdot^\top$ denotes the transpose and $\|\cdot\|_{\mathrm{HS}}$ denotes the Hilbert-Schmidt norm.
It only recovers the optimal bound \cref{eq:Zhai18} in the isotropic case. Moreover, if we only assume a finite fourth-moment condition but not the boundedness condition, the bound \cref{eq:bonisbound} can be of a worse order $O(d^{5/4}/\sqrt{n})$ than in the bounded case (see \cref{rem:bonis}). 
In the general covariance case, a direct application of \cref{eq:bonisbound} after whitening necessarily yields a bound with explicit dependence on both the dimension \(d\) and the operator norm \(\|\Sigma\|_{\mathrm{op}}\).

These comparisons motivate a fourth-moment bound that retains the covariance structure while giving the optimal dimensional order in both settings.
In this paper, we obtain the following fourth-moment bound for positive definite $\Sigma$:
\ben{\label{eq:intro3}
\mathcal{W}_2(\mathcal{L}(W_n), N(0,\Sigma))\leq\sqrt{\frac{2}{n^2}\sum_{i=1}^n \E\left[ |X_i'-X_i|^2 (X_i'-X_i)^\top\Sigma^{-1}(X_i'-X_i)\right] },
}
where $X_i'$ is an independent copy of $X_i$.
The bound \cref{eq:intro3} gives the optimal order \(O(\beta\sqrt{d/n})\) for bounded summands with general covariance, removing the logarithmic factor in \cref{eq:Zhai18} (see \cref{cor:bounded}). In the isotropic case, it yields the optimal order \(O(d/\sqrt n)\) under the typical fourth-moment condition \(\mathbb E|X_i|^4\asymp d^2\) (see \cref{cor:fourth}).  \cref{thm:fourth} gives a sharper bound that additionally incorporates coordinatewise minima with the corresponding variances.
\cref{prop:fourth} gives corresponding $p$-Wasserstein bounds for $p\geq 2$.

These estimates also lead to bounds without an inverse covariance matrix or an explicit dimensional factor. Applying a truncation and smoothing argument to the \(p=4\) case of \cref{prop:fourth}, we obtain a 4-Wasserstein bound in terms of the fourth-moment operator \(Q\) in \cref{cor:Hilbert}. The bound extends to Hilbert space, and its \(n^{-1/4}\) rate is optimal uniformly for the stated numerator. For i.i.d. vectors in \(\mathbb R^d\), a dyadic coupling construction based on the KMT scheme further yields a near-optimal maximal \(L^4\) strong approximation bound (see \cref{thm:strong}).


We now explain our proof strategy for the main bound \cref{eq:intro3}. In the literature on Wasserstein CLT bounds, a common starting point is a transportation inequality. In particular, \cite{Bonis2020} used the transportation inequality from \cite{otto2000general} and \cite{LeNoPe15}, which exploits a coupling between the random vector of interest and a Gaussian vector through the Ornstein--Uhlenbeck process. 
Recent work has started to exploit the F\"ollmer process as an alternative coupling (\cite{fang2024sharp}, \cite{fang2026martingale}). However, these results did not make use of the property that the drift in the F\"ollmer process is a martingale, a fact pointed out in \cite{lehec2013representation}.
We first exploit this martingale property to obtain a new transportation inequality in \cref{prop:transport} (without this new transportation inequality, the resulting bound will have an additional $\sqrt{\log n}$ factor). Then, following \cite{Bonis2020}, we use the score representation and the exchangeable pair approach in Stein's method (\cite{stein1986approximate}) to give an expansion of the upper bound in the transportation inequality.
Finally, we use Bessel's inequality to control the upper bound in the desired form (the application of Bessel's inequality is crucial in removing the explicit dependence on the dimension $d$ in \cref{eq:bonisbound}).
The method works for $p$-Wasserstein bounds for general $p\geq 2$ (see \cref{conj}).
We remark that \cite[Section~2]{eldan2020clt} used a similar $L^2$ version of the transportation inequality, although they did not use the score representation suitable for applying Stein's method.

The paper is organized as follows. In \cref{sec:results}, we state our main result and its corollaries and make various remarks about them. In \cref{sec:proof1}, we give the proof of our main result, making use of a new transportation inequality \cref{eq:transport-Wp}.
In \cref{sec:proof2}, we prove the other results. 

\paragraph{Statement of AI use.}  The authors formulated the research problem and conjectured an error bound similar in form to (2.1). In developing the proofs, they used GPT 6 Pro to explore possible approaches and assist with mathematical arguments. The authors critically evaluated the suggestions, revised the arguments, and verified the proofs presented in the final manuscript. They take full responsibility for the mathematical content of the paper.

\paragraph{Acknowledgements.} The investigation leading to the covariance-free Wasserstein CLT bounds in \cref{cor:Hilbert} was partly motivated by questions from Krishna Balasubramanian and Nathan Ross.
We thank Krishna Balasubramanian, Yuta Koike and Nathan Ross for helpful
discussions on the topic. Fang X. was partially supported by Hong Kong RGC GRF 14304822, 14303423, 14302124, 14304125 and a CUHK direct grant.

\section{Main results}\label{sec:results}

We first state our main result. 
Since the $p$-Wasserstein distance is invariant under rotation, we assume a diagonal covariance matrix with non-zero diagonal entries without loss of generality (see \cref{prop:fourth} for the general case).

\begin{theorem}\label{thm:fourth}
Let $X_1, \dots, X_n$ be a sequence of independent mean-zero random vectors in $\IR^d$.
Let $W_n=(X_1+\dots+X_n)/\sqrt{n}$ and $\Cov(W_n)=\Sigma=\diag(\sigma_1^2,\dots, \sigma_d^2)$. Then
\ben{\label{eq:thm1-1}
\mathcal{W}_2(\mathcal{L}(W_n), N(0,\Sigma))\leq \sqrt{2\sum_{j=1}^d \min \left\{\sigma_j^2, \frac{\sum_{i=1}^n\E \left[|X_i'-X_i|^2(X'_{ij}-X_{ij})^2\right] }{n^2 \sigma_j^2}  \right\}},
}
where $X_i'$ is an independent copy of $X_i$ and $X_{ij}$ ($X_{ij}'$ resp.) denotes the $j$th coordinate of the vector $X_i$ ($X_i'$ resp.).
\end{theorem}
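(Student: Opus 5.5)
We follow the strategy outlined in the introduction. The first step is a transportation inequality based on the F\"ollmer process with covariance $\Sigma$. Let $B$ be a Brownian motion with covariance $\Sigma$ on $[0,1]$, and let $W$ be obtained as the time-one value of the F\"ollmer process with drift $v_t(x)=\E[W_n-B_t\mid B_t+\ldots]$. Equivalently, $v_t = \E[W_n \mid \mathcal F_t]$-type expressions, written in terms of the score of $W_n+\sqrt{1-t}\,$Gaussian smoothing. Then $X_1-B_1=\int_0^1 v_t\,dt$ and $X_1\sim W_n$. The key point, following \cite{lehec2013representation}, is that $(v_t)$ is a martingale, so for each coordinate $j$ the quantity $\E[v_{t,j}^2]$ is nondecreasing in $t$. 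This lets us bound $\E|\int_0^1 v_{t,j}dt|^2$ without the $\log$ loss that arises from Cauchy--Schwarz with weights $1/(1-t)$. The expected outcome is a coordinatewise inequality
\[
\mathcal W_2^2\le \sum_j \E\Big(\int_0^1 v_{t,j}\,dt\Big)^2 ,
\]
with each term expressed through the score $\rho_t$ of $W_t=\sqrt t W_n+\sqrt{1-t}Z$, $Z\sim N(0,\Sigma)$. In particular, it is controlled by an integral of $\E|(\rho_t(W_t)+\Sigma^{-1}W_t)_j|^2\sigma_j^2$ with an integrable weight. We take this transportation inequality as the \cref{prop:transport} referred to in the introduction.

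The second step gives two bounds per coordinate. The trivial one uses the coupling $(W_{n,j},Z_j)$ independent, or equivalently $v_{t,j}$ bounded in $L^2$ by its terminal value via the martingale property. This yields $\E(\int v_{t,j})^2\le 2\sigma_j^2$, up to the constant $2$ obtained from $\E(W_{n,j}-B_{1,j})^2\le 2\sigma_j^2$. The nontrivial bound follows \cite{Bonis2020}. Write the score via the exchangeable pair $W_n'=W_n+(X_I'-X_I)/\sqrt n$ with $I$ uniform. Using $\E[\Sigma^{-1}W_t\,\text{-term}]$ and Stein's identity, $\rho_t(W_t)+\Sigma^{-1}W_t$ becomes a conditional expectation of
\[
\frac{\sqrt t}{n}\sum_i \E\Big[\big(\Sigma^{-1}D_i\big)\Big(\tfrac{1}{2}D_i^\top(\ldots)\Big)\Big|\,W_t\Big]-\text{Gaussian correction},
\]
with $D_i=X_i'-X_i$. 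Expanding $e^{s D_i\cdot\nabla}$ to second order, the mean-zero and covariance-matching terms cancel. What remains is an expression like $\E[ D_{ij}\,(D_i^\top H_t(W_t)\ldots)\mid W_t]/\sigma_j^2$, where $H_t$ involves Hermite-type polynomials of the Gaussian part.

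The third step is the crucial one, and we expect it to be the main obstacle. We must bound $\E\big|\E[\,\sum_i D_{ij} \,\langle D_i, h(Z)\rangle\mid W_t]\big|^2$ by $\sum_i\E[|D_i|^2D_{ij}^2]$ times a $t$-integrable factor, with no $d$ loss. We plan to condition on everything except the Gaussian component. The functions $\langle D_i,\Sigma^{-1/2}\partial\rangle$ applied to the Gaussian density give first-order Hermite functions $\langle u,\tilde Z\rangle$, which are orthonormal directions in $L^2(\gamma)$. Bessel's inequality then bounds the squared projection by $\sum_i |D_i|^2 D_{ij}^2$-type quantities rather than $d\cdot\max$. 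Integrating in $t$ against the weight from step one, the singular factor near $t=1$ must be offset by the $1/(n\sigma_j^2)$ from the expansion. We will check that this gives exactly $\sum_i\E[|D_i|^2D_{ij}^2]/(n^2\sigma_j^2)$ with constant $2$.

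Finally, taking the minimum of the two bounds coordinatewise and summing over $j$ yields \cref{eq:thm1-1}. This is legitimate because the transportation inequality decomposes by coordinates and either bound may be used for each $j$ separately.
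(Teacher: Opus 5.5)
Your final step is where the argument breaks. Taking, for each $j$ separately, the smaller of the two bounds inside a \emph{single} coupling $(W_n,G)$ is essentially \cref{conj:uniform}, which the paper explicitly leaves open. Your two per-coordinate bounds do not live in the same coupling. The bound $2\sigma_j^2$ is justified through an independent coupling of $W_{n,j}$ and $G_j$, which is not the F\"ollmer coupling you use for the other coordinates. Moreover, the F\"ollmer/Bonis computation restricted to coordinate $j$ does not produce the $j$th summand of \cref{eq:thm1-1}. Write $\Delta_i=X_i'-X_i$. The $j$th coordinate of the chaos terms has coefficients $\E[\Delta_{ij}(\Sigma^{-1/2}\Delta_i)^{\otimes k}\mid\cdot\,]$, and the truncation is in $|\Sigma^{-1/2}\Delta_i|$. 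So the natural per-coordinate bound is of order $n^{-2}\sum_i\E[\Delta_{ij}^2\,\Delta_i^\top\Sigma^{-1}\Delta_i]$, with the inverse-variance weights on the \emph{other} coordinates. It is not $n^{-2}\sigma_j^{-2}\sum_i\E[|\Delta_i|^2\Delta_{ij}^2]$; the two agree only after summing over $j$. Your Step~3 asserts the latter (``we will check'') without any mechanism that yields it.

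The missing idea is the paper's subset-and-glue argument, which gets the minima from the aggregate bound alone. Fix $J\subseteq\{1,\dots,d\}$. The subvector $(W_n)_J$ is again a normalized sum of independent vectors with diagonal covariance $\Sigma_J$. Apply the aggregate bound $\mathcal W_2^2\le 2n^{-2}\sum_i\E[|\Delta_i|^2\Delta_i^\top\Sigma^{-1}\Delta_i]$ (the case $p=2$ of \cref{prop:fourth}) to it and use $|(\Delta_i)_J|\le|\Delta_i|$. This gives $\sum_{j\in J}$ of the second entries of the minima. Next, extend an optimal coupling of $(W_n)_J$ and $G_J$ to all of $W_n$, and attach an independent $G_{J^c}\sim N(0,\Sigma_{J^c})$. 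Diagonality of $\Sigma$ makes $(G_J,G_{J^c})\sim N(0,\Sigma)$, and the $J^c$ coordinates cost exactly $2\sum_{j\notin J}\sigma_j^2$. Minimizing over $J$ yields \cref{eq:thm1-1}.

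Your Steps~2--3 also need repair to prove the aggregate bound itself. A second-order expansion with remainder would require moments beyond the fourth and a non-integrable time weight at the singular end. The paper instead truncates $D$ at $|\Sigma^{-1/2}D|\le\sqrt{e^{2t}-1}$, keeps the full Hermite series ($k\ge3$, coefficients $1-k(k-1)/4$), and pays the truncation bias. The dimension-free step is Bessel's inequality on the \emph{data} side. The variables $(\Sigma^{-1/2}\Delta_I)_j/\sqrt2$ are orthonormal in $L^2$, and this is applied to the deterministic mean tensors $\sum_i\E[U_i\otimes\Sigma^{-1/2}\Delta_i]$, giving \cref{eq:sum-bessel}. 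Orthogonality of Hermite functions in Gaussian space only converts chaos terms into Hilbert--Schmidt norms and does not remove $d$. Finally, the martingale property enters through $\E[v_s^\top v_t]=\E|v_s|^2$ for $s<t$, which gives $\E|\int_0^1v_u\,du|^2=2\int_0^1(1-u)\E|v_u|^2\,du$. Monotonicity of $\E|v_t|^2$ alone is not what is needed.
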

The proof of \cref{thm:fourth} is given in \cref{sec:proof1}.

From \cref{thm:fourth}, we obtain the following corollary for the bounded case, which attains the optimal order.

\begin{corollary}\label{cor:bounded}
Under the conditions of \cref{thm:fourth}, assume in addition that $|X_i|\leq \beta$ for all $i\in \{1,\dots, n\}$. Then,
\ben{\label{eq:cor1-1}
\mathcal{W}_2(\mathcal{L}(W_n), N(0,\Sigma))\leq \frac{4\beta\sqrt{d}}{\sqrt{n}}.
}
\end{corollary}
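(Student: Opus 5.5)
We apply \cref{thm:fourth} and bound each summand of \eqref{eq:thm1-1} using only $|X_i|\le\beta$. The boundedness gives $|X_i'-X_i|\le 2\beta$, hence $|X_i'-X_i|^2\le 4\beta^2$ almost surely, and therefore
\[
\sum_{i=1}^n\E\bigl[|X_i'-X_i|^2(X'_{ij}-X_{ij})^2\bigr]\le 4\beta^2\sum_{i=1}^n \E(X'_{ij}-X_{ij})^2 = 8\beta^2\sum_{i=1}^n \E X_{ij}^2 = 8\beta^2 n\sigma_j^2 .
\]
The last two equalities use independence of $X_i'$ and $X_i$, the mean-zero property, and $\sigma_j^2=\frac1n\sum_i \E X_{ij}^2$, which is the $(j,j)$ entry of $\Cov(W_n)$.

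Substituting this into the second argument of each minimum makes the $\sigma_j^2$ cancel:
\[
\frac{\sum_{i=1}^n\E\bigl[|X_i'-X_i|^2(X'_{ij}-X_{ij})^2\bigr]}{n^2\sigma_j^2}\le \frac{8\beta^2}{n}.
\]
We simply discard the option $\sigma_j^2$ and use $\min\{a,b\}\le b$. Summing over $j=1,\dots,d$ bounds the right-hand side of \eqref{eq:thm1-1} by
\[
\sqrt{2\cdot d\cdot 8\beta^2/n}=\frac{4\beta\sqrt d}{\sqrt n},
\]
which is exactly \eqref{eq:cor1-1}.

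No step is a real obstacle; the only point needing care is the constant. Bounding $|X_i'-X_i|^2$ by $4\beta^2$ uniformly, rather than expanding the product, is what gives exactly $16$ under the square root. If some $\sigma_j=0$, that coordinate is degenerate and the theorem's convention of nonzero diagonal entries covers it. Alternatively, the term can be bounded by $\sigma_j^2\le\beta^2/n$ through the first argument of the minimum, which gives an even smaller contribution.
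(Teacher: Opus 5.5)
Your proof is correct and matches the paper's argument: bound each minimum by its second term, then use $|X_i'-X_i|^2\le 4\beta^2$ and $\sum_i\E(X_{ij}'-X_{ij})^2=2n\sigma_j^2$, so each summand is at most $8\beta^2/n$. One slip in your closing aside, which the proof does not use: in general $\sigma_j^2=\frac1n\sum_i\E X_{ij}^2\le\beta^2$, not $\beta^2/n$, so the first argument of the minimum does not give a smaller contribution in general.
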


\begin{proof}[Proof of \cref{cor:bounded}]
The bound \cref{eq:cor1-1} follows from \cref{eq:thm1-1} by bounding the $\min$ by the second term and using $|X_i'-X_i|\leq 2\beta$ and $\sum_{i=1}^n \E(X_{ij}'-X_{ij})^2=2n\sigma_j^2$.
\end{proof}

Next, we deduce a fourth-moment bound in the isotropic case.

\begin{corollary}\label{cor:fourth}
Under the conditions of \cref{thm:fourth}, assume in addition that $\Sigma=I_d$. Then
\ben{\label{eq:cor2-1}
\mathcal{W}_2(\mathcal{L}(W_n), N(0,I_d))\leq 4\sqrt{\frac{2\sum_{i=1}^n \E|X_i|^4}{n^2}}.
}
\end{corollary}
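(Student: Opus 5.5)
We deduce \cref{eq:cor2-1} directly from \cref{thm:fourth} with $\sigma_j=1$ for all $j$. In this case we bound each minimum in \cref{eq:thm1-1} by its second argument and sum over $j$. Since $\sum_{j=1}^d (X'_{ij}-X_{ij})^2=|X_i'-X_i|^2$, this gives
\[
\mathcal{W}_2(\mathcal{L}(W_n), N(0,I_d))\leq \sqrt{\frac{2}{n^2}\sum_{i=1}^n \E|X_i'-X_i|^4}.
\]

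It then remains to bound $\E|X_i'-X_i|^4$ by a constant multiple of $\E|X_i|^4$. The convexity bound $|a-b|^4\le 8(|a|^4+|b|^4)$ gives $\E|X_i'-X_i|^4\le 16\,\E|X_i|^4$. Taking square roots yields $\sqrt{16}=4$ in front of $\sqrt{2\sum_i\E|X_i|^4/n^2}$, which is exactly the constant in \cref{eq:cor2-1}.

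A sharper route is to expand $|X_i'-X_i|^4=(|X_i'|^2+|X_i|^2-2\langle X_i',X_i\rangle)^2$ and use independence and the mean-zero property, so that the odd cross terms vanish. This gives $\E|X_i'-X_i|^4=2\E|X_i|^4+2(\E|X_i|^2)^2+4\|\Cov(X_i)\|_{\mathrm{HS}}^2\le 8\E|X_i|^4$, using Jensen and $\|\Cov(X_i)\|_{\mathrm{HS}}^2\le \E|X_i|^4$. That would improve the constant, but the crude bound already suffices.

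There is no real obstacle here. The only step that needs care is the summation identity over coordinates, which is what removes the dimension dependence.
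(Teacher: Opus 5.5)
Your proposal is correct and follows the paper's proof: bound each minimum in \cref{eq:thm1-1} by its second argument, sum over coordinates using $\sum_j (X'_{ij}-X_{ij})^2=|X_i'-X_i|^2$, and bound $\E|X_i'-X_i|^4\le 16\,\E|X_i|^4$. The paper obtains the last bound from Minkowski's inequality $\|X_i'-X_i\|_4\le 2\|X_i\|_4$ instead of your pointwise convexity bound, with the same constant; your sharper expansion giving $\E|X_i'-X_i|^4\le 8\,\E|X_i|^4$ is also correct and would improve the constant $4$ to $2\sqrt2$.
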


\begin{proof}[Proof of \cref{cor:fourth}]
    The bound \cref{eq:cor2-1} follows from \cref{eq:thm1-1} by bounding the $\min$ by the second term and using $\sigma_j^2=1$ and $\|X_i'-X_i\|_4\leq 2\|X_i\|_4$, where $\|\cdot\|_p$ denotes the $L^p$ norm of a random vector.
\end{proof}

\begin{remark}\label{rem:bonis}
Throughout this remark, assume that $X_1,\dots, X_n$ are i.i.d.
In the typical isotropic case $\|X_1\|_4\asymp\sqrt d$,
\cref{eq:cor2-1} yields the rate $O(d/\sqrt n)$,
which is optimal in general by
\cite[Proposition~1.2]{Zhai2018}.

Our fourth-moment bound also improves the dimensional
dependence of \citet{Bonis2020} for certain distributions.
Indeed, writing $M=\E[|X_1|^2 X_1X_1^\top]$,
our bound involves $(\tr M)^{1/2}$, whereas
\cref{eq:bonisbound} involves
$d^{1/4}\|M\|_{\mathrm{HS}}^{1/2}$.
Since $\tr M\le\sqrt d\,\|M\|_{\mathrm{HS}},$ our bound is always no larger, and the improvement
can be of order $d^{1/4}$. To see this, let $X_1=(U,\xi_2,\ldots,\xi_d)$,
where the $\xi_j$ are independent Rademacher variables,
independent of $U$, and $\Prob(U=\pm d)=1/(2d^2), \Prob(U=0)=1-1/d^2.$ Then $$\Cov(X_1)=I_d,\qquad M=\diag(d^2+d-1,d,\ldots,d).$$ Consequently, $\tr M\asymp d^2$ and
$\|M\|_{\mathrm{HS}}\asymp d^2$.
Our bound is therefore $O(d/\sqrt n)$,
whereas \cref{eq:bonisbound} gives
$O(d^{5/4}/\sqrt n)$. \cite{bonis2024improved} obtained error bounds involving the third-moment tensor and a radius \(h>0\) for which
\[
\mathbb{E}\!\left[(X_1'-X_1)(X_1'-X_1)^\top
\mathbf{1}_{\{|X_1'-X_1|\le h\}}\right]
\]
is positive definite. 
For the above example, \(\mathbb{E}[X_1^{\otimes3}]=0\), and the smallest admissible radius is \(h=d\) when \(d\ge2\). A direct application of his Corollary 1.1 gives a worse rate
\[
\mathcal W_2\bigl(\mathcal L(W_n),N(0,I_d)\bigr)
\le  \frac{C d^{3/2}}{\sqrt{n}}+o_d(\frac{1}{\sqrt{n}}),
\]
where $C$ is a universal constant and the small-$o$ notation $o_d$ involves a constant depending on $d$.

\cite{raivc2018multivariate} obtained a third-moment bound $\mathcal{W}_1\leq C\E|X_1|^3/\sqrt{n}$ in the isotropic case $\Sigma=I_d$ (the logarithmic factor therein can be removed according to a personal communication with Martin Rai{\v{c}}). In the typical case that $\|X_i\|_3\asymp \sqrt{d}$, his bound is of the larger order $O(d^{3/2}/\sqrt{n})$.
This is part of the reason that the recent literature on high-dimensional CLTs has started to investigate fourth-moment bounds. See related convex distance bounds in \cite{fang2024large} and \cite{cai2026improved}.
\end{remark}

The coordinatewise minima in \cref{eq:thm1-1} suggest the following result.
\begin{conj}\label{conj:uniform}
Under the conditions of \cref{thm:fourth}, there exists a
coupling $(W_n,G)$, with $G\sim N(0,\Sigma)$, such that
\[
\begin{aligned}
&\mathcal W_2^2\bigl(
\mathcal L(W_{n,j}),N(0,\sigma_j^2)
\bigr)
\le \bigl(\E|W_{n,j}-G_j|^2\bigr)\\
&\quad\le2\min\left\{
\sigma_j^2,\,
\frac1{n^2\sigma_j^2}\sum_{i=1}^n
\E\left[
|X_i'-X_i|^2(X_{ij}'-X_{ij})^2
\right]
\right\},
\end{aligned}
\]
simultaneously for all $j\in\{1,\ldots,d\}$, where $W_{n,j}$ denotes the $j$th coordinate of $W_n$.
\end{conj}
If the conjecture holds, then the minima have a natural interpretation in terms of a tradeoff between a CLT bound and a trivial bound: 
If $\sigma_j^2$ is smaller than the corresponding directional bound, it would be better to control the $j$th coordinate distance in the na\"ive way of independent coupling. This can be regarded as a more sophisticated form of truncating off small eigenvalues in bounding the Wasserstein distance. Unfortunately, the current results seem not enough to deduce the conjecture and we might need a different coupling from the standard F\"ollmer process in proving the Wasserstein CLT bounds. We leave it for future work.

Next, we turn to dimension-free Wasserstein bounds. We will prove the following result in \cref{sec:proof2} using a $p$-Wasserstein counterpart of \cref{thm:fourth} (cf. \cref{prop:fourth}).

\begin{theorem}\label{cor:Hilbert}
Under the conditions of \cref{thm:fourth}, assume that
$X_1,\ldots,X_n$ have finite fourth moments. Then
\ben{\label{eq:cor3-1}
\mathcal{W}_4(\mathcal{L}(W_n), N(0,\Sigma))\leq \frac{C[\tr (Q^{1/2}) ]^{1/2}}{n^{1/4}},
}
where $C$ is a universal constant,
\be{
		Q:=\frac1n\sum_{i=1}^n
		\E\left[|X_i'-X_i|^2(X_i'-X_i)^{\otimes2}\right]
	}
and $\otimes$ denotes the tensor product.
The same bound holds for independent, centered random elements
of a real separable Hilbert space $\ell^2$ with finite fourth moment,
with $\Sigma$ and $Q$ interpreted as the corresponding operators.
\end{theorem}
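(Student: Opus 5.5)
The plan is to reduce to the $p=4$ case of \cref{prop:fourth} and to control the inverse covariance by \emph{truncation}. Directions where the CLT bound is poor are coupled trivially (independently), and the costs of the two parts are balanced against $\tr(Q^{1/2})$. For $\mathcal W_4$ on a direct sum of orthogonal subspaces $V\oplus V^\perp$ we have
\[
\mathcal W_4^2(W_n,G)\lesssim \mathcal W_4^2(P_VW_n,P_VG)+\big(\E|P_{V^\perp}W_n|^4\big)^{1/2}+\big(\E|P_{V^\perp}G|^4\big)^{1/2}.
\]
To see this, glue the optimal coupling on $V$ with an independent coupling on $V^\perp$, using $|a+b|^4\lesssim|a|^4+|b|^4$. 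By Rosenthal's inequality, $(\E|P_{V^\perp}W_n|^4)^{1/2}\lesssim \tr(P_{V^\perp}\Sigma)+\big(\tfrac1{n^2}\sum_i\E|P_{V^\perp}X_i|^4\big)^{1/2}$, and the second term is at most $n^{-1/2}\tr(Q)^{1/2}$-type. So it suffices to find $V$ with both $\tr(P_{V^\perp}\Sigma)$ and the CLT error on $V$ of order $n^{-1/2}\tr(Q^{1/2})$. The Hilbert-space statement follows by applying the finite-dimensional bound to $P_mW_n$ for finite-rank projections $P_m\uparrow I$ and letting $m\to\infty$. Since the bound is dimension-free and $\tr(P_mQP_m)^{1/2}\le\tr(Q^{1/2})$ (via operator monotonicity of the square root), this step is routine.

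The key link between $\Sigma$ and $Q$ is a directional comparison. For a unit vector $u$ and $D_i=X_i'-X_i$, the Cauchy--Schwarz and Jensen inequalities give
\[
(2u^\top\Sigma u)^2=\Big(\tfrac1n\sum_i\E(u\cdot D_i)^2\Big)^2\le \tfrac1n\sum_i\E(u\cdot D_i)^4\le u^\top Q u .
\]
Now let $Q=\sum_k\lambda_k u_ku_k^\top$ and take a threshold $t>0$. With $V_0=\mathrm{span}\{u_k:\lambda_k\ge t\}$, the comparison gives $\tr(P_{V_0^\perp}\Sigma)\le\tfrac12\sum_{\lambda_k<t}\lambda_k^{1/2}$. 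Also $\dim V_0\le t^{-1/2}\tr(Q^{1/2})$. Inside $V_0$, diagonalize $P_{V_0}\Sigma P_{V_0}$ and discard the eigendirections whose variance is below a level $\tau$. Their trivial cost is at most $\tau\dim V_0\le \tau t^{-1/2}\tr(Q^{1/2})$. On the remaining subspace $V$, apply \cref{prop:fourth} with $p=4$ to $P_VW_n$, whose covariance is now bounded below by $\tau$. This yields a main term $\tfrac1n\sum_{w}w^\top Qw/\sigma_w^2$ over the kept eigenvectors $w$, plus higher-order ($p=4$) remainder terms. Those remainders I expect to be controlled by the same quantities after truncating the summands at level $\sqrt n$; the truncation error is absorbed by the fourth moments.

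The main obstacle is that $\Sigma$ and $Q$ are not simultaneously diagonalizable, so the CLT term $\tfrac1n\tr(Q_V\Sigma_V^{-1})$ has to be compared with $\sum_k\lambda_k^{1/2}$ rather than with $\sum_k\lambda_k$. A single pair of thresholds $(t,\tau)$ is too crude: the naive bound $\tr(Q_V)/(n\tau)$ loses a factor of $\lambda_k^{1/2}/t^{1/2}$. So the first thing I would try is a dyadic decomposition. Partition the spectrum of $Q$ into blocks $\{\lambda_k\in[2^{m},2^{m+1})\}$ and in block $m$ use the variance cutoff $\tau_m\asymp 2^{m/2}n^{-1/2}$. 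Then on each block both the trivial cost and $\tfrac1n\tr(Q\Sigma^{-1})$ are $\lesssim n^{-1/2}\sum_{k\in\text{block}}\lambda_k^{1/2}$, and summing over $m$ gives $n^{-1/2}\tr(Q^{1/2})$. Making this rigorous requires controlling the cross terms between blocks in $\Sigma^{-1}$. A second option, if the block approach fails, is a regularized version of \cref{prop:fourth} with $\Sigma^{-1}$ replaced by $(\Sigma+(Q/n)^{1/2})^{-1}$, obtained by the smoothing step: add an independent Gaussian with covariance $(Q/n)^{1/2}$ and pay its trace $n^{-1/2}\tr(Q^{1/2})$ in $\mathcal W_4^2$. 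Then $\tfrac1n\tr\big(Q(\Sigma+(Q/n)^{1/2})^{-1}\big)\le n^{-1/2}\tr(Q^{1/2})$ follows from operator monotonicity.
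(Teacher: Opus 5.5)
\textbf{Gap in the primary route.} The block-by-block discarding of directions in the spectral decomposition of $Q$ does not close. The obstruction is exactly the cross-term issue you flag, and it is not a technicality. On the kept subspace $V=\bigoplus_m V_m$ you have $P_{V_m}\Sigma P_{V_m}\succeq\tau_m$ on each diagonal block, but this gives no lower bound on $\lambda_{\min}(P_V\Sigma P_V)$. The diagonal blocks of $(P_V\Sigma P_V)^{-1}$ are inverses of Schur complements, and they can be arbitrarily larger than $\tau_m^{-1}$ when $\Sigma$ correlates different spectral blocks of $Q$. Both the main term $\frac1n\tr\bigl(Q_V(P_V\Sigma P_V)^{-1}\bigr)$ and the $p=4$ remainder in \cref{prop:fourth} involve this inverse, so neither is controlled. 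Since $\Sigma$ and $Q$ need not commute, there is no evident choice of blocks that avoids this. Your ``second option'' is therefore not a fallback but the whole proof, and it is the paper's argument. With $A=n^{-1/2}Q^{1/2}$ and $M=\wt\Sigma+A\succeq A$, one gets $\frac1n\tr(M^{-1}\wt Q)\le\frac4n\tr(A^{-1}Q)=4n^{-1/2}\tr(Q^{1/2})$ in all directions at once. No thresholds or directional discarding are needed.

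\textbf{What the smoothing sketch still needs.} (i) \cref{prop:fourth} applies to sums of independent summands. So write the smoothing vector $Z\sim N(0,A)$ as a sum of $K$ i.i.d.\ $N(0,A/K)$ vectors, treat them as extra summands, and let $K\to\infty$. Their contributions to the two quantities in \cref{prop:fourth} are $O(1/K)$ and $O(K^{-2})$. Splitting $Z$ into $n$ pieces attached to the $X_i$ would instead bring in a term of order $\frac1n\tr(A)\tr(M^{-1}A)$, and $\tr(M^{-1}A)$ can be of the order of the dimension in directions where $\wt\Sigma\ll A$. Work on $\mathrm{range}(Q)$, which contains every $X_i$ because $Q\succeq4\Sigma_i^2/n$, so that $A^{-1}$ makes sense.

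(ii) The truncation level must be $\sqrt n\,\varepsilon$ with $\varepsilon=n^{-1/4}\{\tr(Q^{1/2})\}^{1/2}$, not $\sqrt n$; the latter is not even scale-invariant, and at level $\sqrt n$ the estimate below gives only $O(\varepsilon^{1/2})$. At level $\sqrt n\,\varepsilon$, $|\delta_i|\le2\varepsilon$ for $\delta_i=(\wt X_i'-\wt X_i)/\sqrt n$, which bounds the sixth-moment remainder sum by $16\varepsilon^4$. Meanwhile $\E|W_n-\wt W_n|^4\lesssim n^{-2}\sum_i\E|X_i|^4+\bigl(n^{-2}\varepsilon^{-2}\sum_i\E|X_i|^4\bigr)^2\lesssim\varepsilon^4$, using $n^{-2}\sum_i\E|X_i|^4\le\tr Q/(2n)\le\{\tr(Q^{1/2})\}^2/(2n)$. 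Balancing these two estimates is what fixes the level.

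(iii) You need $\wt Q\preceq4Q$ for the truncated, recentered summands. This is not automatic, because truncation is not monotone in $|X_i'-X_i|$. It follows from $|u-v|^2(u-v)^{\otimes2}\preceq4(|u|^2+|v|^2)(u^{\otimes2}+v^{\otimes2})$ together with the identity $\E[|X_i'-X_i|^2(X_i'-X_i)^{\otimes2}]=2\E[|X_i|^2X_i^{\otimes2}]+2(\tr\Sigma_i)\Sigma_i+4\Sigma_i^2$.

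(iv) Finally, compare $N(0,\wt\Sigma)$ with $N(0,\Sigma)$ using $0\preceq\Sigma-\wt\Sigma$ and $\tr(\Sigma-\wt\Sigma)\le\varepsilon^2$. Then remove $Z$ from both laws, at cost at most $\|Z\|_4\le3^{1/4}\varepsilon$ on each side.
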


\begin{remark}\label{rem:ab}
(a) The $n^{-1/4}$ rate in \cref{cor:Hilbert} is optimal given the
form of the numerator, even for bounded summands and for the 2-Wasserstein distance. 
Indeed, let $d=m$, $n=m^2$, and let $X_1,\ldots,X_n$ be
independent copies of $X$ with
\[
\Prob(X=e_j)=\Prob(X=-e_j)=\frac1{2m^2},
\quad j=1,\ldots,m,
\qquad
\Prob(X=0)=1-\frac1m.
\]
Then $|X|\le1$, $\Sigma=m^{-2}I_m$, and a direct calculation gives
\[
\tr(Q^{1/2})=\sqrt{2+\frac2m+\frac4{m^2}}\asymp1.
\]
Since $W_n\in m^{-1}\mathbb Z^m$, for $G\sim N(0,\Sigma)$,
\[
\mathcal W_2\bigl(\mathcal L(W_n),N(0,\Sigma)\bigr)
\ge \sqrt{\sum_{j=1}^m\E\dist(G_j,m^{-1}\mathbb Z)^2}
=\sqrt{\frac{\E\dist(Z,\mathbb Z)^2}{m}}
\asymp n^{-1/4},
\]
where $Z\sim N(0,1)$.

 
(b) One may wonder if we could replace the numerator of the upper bound in \cref{eq:cor3-1} with a simpler quantity involving only $\E |X_i'-X_i|^4$. However, this is impossible. 
In the infinite-dimensional Hilbert-space setting, a finite
fourth moment alone does not guarantee any universal rate of convergence.
In \cref{sec:proof2}, for every positive sequence $a_n\downarrow0$, we will give an example of
a fixed mean-zero random vector $X$ in $\ell^2$, with $|X|=1$ almost surely,
such that, for independent copies $X_1,X_2,\ldots$ of $X$,
\[
 W_n=\frac{1}{\sqrt n}\sum_{i=1}^nX_i,
 \qquad \Sigma=\operatorname{Cov}(X),
 \qquad d_n=\mathcal W_2\bigl(\mathcal L(W_n),N(0,\Sigma)\bigr),
\]
we have
\[
 d_n\longrightarrow0,
 \qquad \frac{d_n}{a_n}\longrightarrow\infty.
\]

(c) As discussed above, the error bound in \cref{eq:cor3-1} is already optimal for $\mathcal{W}_2$ with the displayed numerator. It is perhaps surprising that we can raise it to the stronger $\mathcal{W}_4$. This is associated with the variance-free nature of the upper bound. For comparison, classical \(O(n^{-1/2})\) bounds for \(\mathcal W_p\) are obtained under a finite \((p+2)\)th moment assumption; see \cite[Theorem~1.1]{bobkov2018berry}. For \(p>4\), a general bound must involve additional \(p\)th-moment information, while the bounded example in (a) rules out a faster uniform rate within the present formulation. We therefore focus on \(p=4\), the largest Wasserstein order that can be controlled by the fourth-moment quantity in \cref{eq:cor3-1} alone.
\end{remark}

In our final result, we address the natural question of strong approximations. We obtain the following result by combining our new $p$-Wasserstein bounds with the Koml\'os--Major--Tusn\'ady (KMT) embedding construction (see the paragraph before the proof of \cref{thm:strong} in \cref{sec:proof2} for a discussion on the related literature).

\begin{theorem}\label{thm:strong}
Let $X_1,\ldots,X_n$ be i.i.d. mean-zero random vectors
in $\R^d$ with finite fourth moment and covariance matrix
$\Sigma$. There is a coupling of these vectors with mutually
independent $G_1,\ldots,G_n\sim N(0,\Sigma)$ such that
\ben{\label{eq:thmstrong}
\Big(\E \max_{1\leq k\leq n} \Big|\frac{1}{\sqrt{n}}\sum_{i=1}^k (X_i-G_i)\Big|^4\Big)^{1/4}\leq \frac{C[\tr (Q^{1/2})]^{1/2}}{n^{1/4}}(1+\log n),
}
where $C$ is a universal constant and 
\be{
		Q:=
		\E\left[|X_1'-X_1|^2(X_1'-X_1)^{\otimes2}\right].
	}
\end{theorem}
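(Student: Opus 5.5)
We plan to prove \cref{thm:strong} with a dyadic (KMT-type) construction. The coupling is built along a binary tree of blocks, and at each node it uses the $\mathcal W_4$ bound of \cref{cor:Hilbert}, applied conditionally. Assume first $n=2^K$. The general case follows by padding with zero vectors, or by treating a remainder block separately; either changes only constants.

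\emph{Step 1 (the tree).} For a dyadic block $B$ of length $2^k$ write $S_B=\sum_{i\in B}X_i$, and let $B=B'\cup B''$ be its two halves. In the Gaussian world, $T_B=\sum_{i\in B}G_i$ satisfies $T_{B'}-T_{B''}\,|\,T_B\sim N(0,2^k\Sigma)$. The Gaussian conditional law is therefore a fixed centered normal that does not depend on $T_B$.

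\emph{Step 2 (coupling at each node).} Top-down, we couple $S_{B'}-S_{B''}$ with $T_{B'}-T_{B''}$. A standard KMT approach would couple conditionally on $S_B$. The conditional law of $S_{B'}-S_{B''}$ given $S_B$ is not a sum of independent vectors, so \cref{cor:Hilbert} does not apply to it directly. I would instead use the simpler \emph{independent-increment} variant (in the spirit of Zaitsev/Einmahl-type Brownian embeddings). We build the full sequence of block sums level by level, using that $S_{B'}$ and $S_{B''}$ are independent and each is a normalized sum to which \cref{cor:Hilbert} applies. Concretely:
\begin{itemize}
\item for each dyadic block $B$ of length $2^k$, \cref{cor:Hilbert} gives a coupling of $2^{-k/2}S_B$ with $N(0,\Sigma)$ with $L^4$ error $\le C[\tr Q^{1/2}]^{1/2}2^{-k/4}$;
\item we glue these couplings consistently across levels, via conditional couplings and the gluing lemma, so that the Gaussian block sums are additive.
\end{itemize}
The partial-sum error at the root is then controlled by a telescoping argument. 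Any $k\le n$ is a disjoint union of at most one dyadic block per level $\ell=0,\dots,K$. The error of such a union is a sum of block errors, and the block at level $\ell$ contributes an $L^4$ norm of order $C[\tr Q^{1/2}]^{1/2}2^{\ell/4}$ in unnormalized units.

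\emph{Step 3 (the maximum).} For each level $\ell$ we bound $\E\max_{B}|\text{err}_B|^4$ over the $n2^{-\ell}$ blocks at that level. Using a sum instead of a max costs a factor $(n2^{-\ell})^{1/4}$, which gives $n^{1/4}[\tr Q^{1/2}]^{1/2}$ per level uniformly in $\ell$. Dividing by $\sqrt n$ gives $[\tr Q^{1/2}]^{1/2}n^{-1/4}$ per level. Summing over $K+1\asymp 1+\log n$ levels with Minkowski's inequality yields \cref{eq:thmstrong}.

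\emph{Main obstacle.} The main obstacle is Step 2, the consistency of the gluing. The Gaussian variables at level $\ell$ must be exact sums of those at level $\ell-1$, while the $X$-side already satisfies this automatically. I would try first to couple top-down: given the root pair, couple each child pair conditionally. This requires a conditional version of \cref{cor:Hilbert}, meaning a $\mathcal W_4$ bound between the law of $S_{B'}$ given $S_B$ and $N(S_B/2,2^{k-1}\Sigma)$, uniformly enough in the conditioning to integrate. If that fails, the fallback is to avoid conditioning and define the Gaussian leaves as $G_i$ obtained from a bottom-up coupling at the finest level only. Larger blocks are then handled by applying \cref{cor:Hilbert} to the \emph{differences} of within-block errors, exploiting independence across disjoint blocks so that the errors at each level are independent sums. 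The logarithmic loss in \cref{eq:thmstrong} leaves room for this cruder route.
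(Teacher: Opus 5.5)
\textbf{Verdict: there is a genuine gap in Step 2.} Your Step 3 matches the paper's summation: you replace the per-level maximum by a sum over the $n2^{-\ell}$ blocks, which gives $n^{1/4}\{\tr(Q^{1/2})\}^{1/2}$ per level, and then sum over $\asymp 1+\log n$ levels by Minkowski. But Step 2, which you yourself flag as the main obstacle, is never carried out, and neither of your routes closes it.

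\emph{The gluing lemma does not give the tree coupling.} It only glues two couplings along a common marginal. It does not produce one joint law of all block sums $(S_B)$ and Gaussian block sums $(T_B)$ in which $T_B=T_{B'}+T_{B''}$ at every node and each pair $(S_B,T_B)$ keeps an $L^4$ error of order $2^{\ell/4}$. Realizing the separate couplings from \cref{cor:Hilbert} simultaneously and consistently is the whole difficulty.

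\emph{The top-down route lacks its key input.} It needs a $\mathcal W_4$ bound for $\mathcal L(S_{B'}\mid S_B)$ that is integrable in $S_B$. \cref{cor:Hilbert} does not supply this, because that conditional law is not the law of a sum of independent vectors.

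\emph{The fallback fails quantitatively.} Once the leaves $G_i$ are fixed at the finest level, all $T_B$ are determined, so nothing can be re-coupled at coarser scales. The block error $\sum_{i\in B}(X_i-G_i)$ is then a sum of $|B|$ i.i.d.\ mean-zero vectors, of size $\sqrt{|B|}$ rather than $|B|^{1/4}$. The normalized root error is $O(1)$, a polynomial loss that the $\log n$ factor cannot absorb.

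\textbf{The missing idea is the paper's \emph{random halving}.} Start from a labeled i.i.d.\ sample. In a block of size $2m$, pair consecutive elements $(x_i,x_i')$ and let independent Rademacher signs $\eta_i$ decide which member of each pair goes to the left half; repeat recursively down the tree.
\begin{itemize}
\item The signs are independent of the sample, so the rearranged sequence is still i.i.d.
\item Conditionally on the pairs, which fix the block sum, the half-difference is $D_I=\sum_{i=1}^m\eta_i a_i$ with $a_i=x_i-x_i'$. This is a sum of independent vectors.
\item So \cref{cor:Hilbert} applies conditionally and gives $\mathcal W_4(\mathcal L(D_I),N(0,\Gamma))\le C\{\tr(R^{1/2})\}^{1/2}$, where $\Gamma=\sum_i a_i^{\otimes2}$ and $R=\sum_i|a_i|^2a_i^{\otimes2}$. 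The mismatch between the random covariance $\Gamma$ and $2m\Sigma$ costs $C\|\Gamma-2m\Sigma\|_1^{1/2}$.
\item The conditional law of the bridge $B_I$ is the fixed law $N(0,2m\Sigma)$. Hence $B_I$ is automatically independent of everything constructed above it, and the additive Gaussian tree comes with no gluing problem.
\item Averaging over the pairs with $\E R=mQ$, $\E(\Gamma-2m\Sigma)^2\preceq mQ$, and Cauchy--Schwarz with weight $Q^{-1/2}$ gives $\E|D_I-B_I|^4\le Cm\{\tr(Q^{1/2})\}^2$.
\item The dyadic hat-function identity then expresses the partial-sum errors through the root error and these bridge errors. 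That is what feeds your Step 3.
\end{itemize}
For general $n$, pad with extra i.i.d.\ copies rather than zero vectors, since the swapping argument uses identical distribution.
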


\begin{remark}
\cref{thm:strong} provides the first dimension-free strong approximation result in the literature. Moreover, the error rate in terms of $n$ is optimal up to the logarithmic factor (see \cref{exam:strong} at the end of \cref{sec:proof2}). It might be possible to remove the logarithmic factor and extend the result to the independent but non-identically distributed setting by adapting the arguments in \cite{gotze2010rates} and \cite{GZ2011}, but we have chosen not to pursue overly technical extensions.
\end{remark}


\section{Proof of the main result}\label{sec:proof1}

We first state a general $p$-Wasserstein
bound for $p\geq 2$. 
Let $\Sigma^\dagger$ denote the Moore--Penrose
pseudoinverse of $\Sigma$, where the inverse of
a zero eigenvalue is defined to be zero.


\begin{proposition}\label{prop:fourth}
Let $p\geq 2$.
Let $X_1,\ldots,X_n$ be independent mean-zero
random vectors in $\R^d$ with finite $p$th moments, and let
$W_n=(X_1+\cdots+X_n)/\sqrt n$ with
$\Cov(W_n)=\Sigma$. Then
\besn{\label{conj}
\mathcal W_p(\mathcal L(W_n),N(0,\Sigma))
\leq& \frac{C_p}{\sqrt n}
\Bigg\{
\left(
\frac1n\sum_{i=1}^n
\E\left[
|X_i'-X_i|^2
(X_i'-X_i)^\top\Sigma^\dagger(X_i'-X_i)
\right]
\right)^{1/2}\\
&\qquad\,+
\left(
\frac1n\sum_{i=1}^n
\E\left[
|X_i'-X_i|^p
(X_i'-X_i)^\top\Sigma^\dagger(X_i'-X_i)
\right]
\right)^{1/p}
\Bigg\},
}
where $C_p$ is a constant depending only on $p$ and $C_2$ can be taken as $1/\sqrt{2}$.

\end{proposition}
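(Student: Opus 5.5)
We will prove this along the lines sketched in the introduction, in three steps: a transportation inequality from the Föllmer process, a Bonis-type expansion of the score, and Bessel's inequality.

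\textbf{Reduction and Föllmer coupling.} First reduce to the case where $\Sigma$ is nondegenerate on its range. Since each $X_i$ has mean zero and $\sum_i\Cov(X_i)=n\Sigma$, every $X_i$ lies almost surely in $\mathrm{range}(\Sigma)$. We may therefore restrict to that subspace, on which $\Sigma^\dagger$ acts as $\Sigma^{-1}$.

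Next, let $B$ be a Brownian motion with covariance $\Sigma$ and consider the Föllmer process
\[
dY_t=v_t(Y_t)\,dt+dB_t,\qquad t\in[0,1],
\]
with $Y_1\sim\mathcal L(W_n)$. Its drift can be written as
\[
v_t(x)=\frac{\E[W_n-x\mid W_n+\sqrt{1-t}\,\tilde Z=x]}{1-t},
\]
up to the choice of parametrization, with $\tilde Z\sim N(0,\Sigma)$. Following \cite{lehec2013representation}, the process $v_t(Y_t)$ is a martingale. We then couple $Y_1$ with $B_1\sim N(0,\Sigma)$, which gives
\[
\mathcal W_p\le \Big\|\int_0^1 v_t(Y_t)\,dt\Big\|_p.
\]
The martingale property is what lets us avoid the crude bound $\int_0^1\|v_t\|_p\,dt$, which produces the $\log n$ factor. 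Write $\int_0^1 v_t\,dt=\int_0^1(1-s)\,dv_s+v_0$ by integration by parts, and apply Burkholder--Davis--Gundy to the martingale part. Via Itô's formula, $dv_s=\nabla v_s\,dB_s$. This gives the transportation inequality
\[
\mathcal W_p\lesssim_p \Big\|\Big(\int_0^1(1-s)^2\,\big|\nabla v_s(Y_s)\Sigma^{1/2}\big|_{\mathrm{HS}}^2\,ds\Big)^{1/2}\Big\|_p ,
\]
and $v_0=\E W_n=0$. For $p=2$ we use Itô's isometry directly, to keep the constant $C_2=1/\sqrt2$.

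\textbf{Score expansion.} Express $\nabla v_s$ through the score of $W_n+\sqrt{1-s}\,\tilde Z$, and use the exchangeable pair obtained by replacing a uniformly chosen $X_I$ with $X_I'$, as in \cite{Bonis2020}. The Stein identity
\[
\E[(W'-W)\,|\,W]=-W/n
\]
together with Taylor expansion of the Gaussian-smoothed density in the increment $D_i=(X_i'-X_i)/\sqrt n$ expresses $\nabla v_s\Sigma^{1/2}$ as a conditional expectation of a tensor built from $D_i D_i^\top$. The first-order terms cancel because $\sum_i\E D_iD_i^\top=2\Sigma$. The remainder is
\[
\frac1{n}\sum_i\E\big[\,D_iD_i^\top\,\Sigma^{-1/2}\,\Phi_s(D_i)\mid Y_s\big],
\]
where $\Phi_s$ is a Gaussian-smoothed difference operator at scale $\sqrt{1-s}$.

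\textbf{Bessel and integration in $s$.} Expand the smoothed difference in the orthonormal Hermite basis of $L^2(N(0,\Sigma))$. Bessel's inequality then bounds the HS norm squared by an expression that integrates over $s$ to
\[
\frac1n\sum_i\E\big[|X_i'-X_i|^2\,(X_i'-X_i)^\top\Sigma^{-1}(X_i'-X_i)\big],
\]
with no explicit factor of $d$. Crucially, the total weight $\int_0^1(1-s)^2\cdot(1-s)^{-2}\cdots\,ds$ must be finite, and I expect this integrability near $s=1$ to be the main obstacle. Pure Bessel control is only good for $1-s\gtrsim$ the size of the increments. For $p>2$, Rosenthal-type bounds on the conditional sums produce the second term, with $|X_i'-X_i|^p$, coming from the large increments.
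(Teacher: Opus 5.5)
Your transportation step is sound. For $p=2$ it is even equivalent to the paper's: since $v_0=0$, you have $\E|v_u|^2=\int_0^u\E|\nabla v_s\Sigma^{1/2}|_{\mathrm{HS}}^2\,ds$, and hence $\int_0^1(1-s)^2\E|\nabla v_s\Sigma^{1/2}|_{\mathrm{HS}}^2\,ds=2\int_0^1(1-u)\E|v_u|^2\,du$. However, by writing $d[v]_s=|\nabla v_s\Sigma^{1/2}|_{\mathrm{HS}}^2\,ds$ you commit yourself to estimating $\nabla v_s$, and that is where the proposal has a genuine gap.

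The quantity $\nabla v_s$ is the Hessian of the logarithm of a Gaussian-smoothed density, equivalently an affine function of the conditional covariance $\Cov(W_n\mid Y_s)$. It contains the quadratic term $-\nabla\log f\otimes\nabla\log f$ and is not linear in the conditional law. So, unlike the drift itself, it has no Bonis-type representation $\E[R\mid Y_s]$ with $R$ an explicit function of $(W,W',Z)$. Your claimed formula $\frac1n\sum_i\E[D_iD_i^\top\Sigma^{-1/2}\Phi_s(D_i)\mid Y_s]$ is therefore unjustified. Without such a representation, the chain of conditional Jensen, then unconditional moments, then Rosenthal and Bessel cannot start. What you propose is essentially the Eldan--Mikulincer--Zhai formulation (their conditional covariance $\Gamma_t$), whose bound carries an extra $\sqrt{\log n}$. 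The behaviour near $s=1$ that you call the ``main obstacle'' is exactly what must be controlled without loss, and you give no mechanism for it. Your use of Bessel's inequality in the Hermite basis of $L^2(N(0,\Sigma))$ is also not the relevant one: that is mere orthogonality of chaoses and does not remove the dimension.

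The missing idea is never to differentiate the drift. Integrate by parts once more, $\int_0^T(T-u)^2\,d[v]_u=2\int_0^T(T-u)[v]_u\,du$, and use Minkowski, reverse BDG and Doob to get $\|[v]_u\|_{p/2}\le K_p\|v_u\|_p^2$. This gives
\[
\|W-\Sigma^{1/2}Z\|_p^2\le K_p\int_0^1(1-u)\|v_u\|_p^2\,du=K_p\int_0^\infty(1-e^{-2t})\|\rho_t(F_t)\|_p^2\,dt,
\]
which involves only the score $\rho_t(F_t)=e^{-t}\E[W-\Sigma^{1/2}Z_2/\sqrt{e^{2t}-1}\mid F_t]$, a genuine conditional expectation. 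The paper then proceeds as follows.

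\begin{enumerate}
\item It writes $e^t\rho_t(F_t)=\E[R_t\mid F_t]$ using the exchangeable pair with the time-dependent truncation $D_t=D\mathbf 1\{|\Sigma^{-1/2}D|\le\sqrt{e^{2t}-1}\}$. Conditionally on $W$, $R_t$ is then a sum of Hermite chaoses in $Z_2$ of orders $0,1,3,4,\dots$.
\item It applies conditional Jensen and Bonis's Lemma~3, and bounds the centered parts by Rosenthal.
\item It applies Bessel to the orthonormal family $(\Sigma^{-1/2}\Delta_I)_j/\sqrt2$ in $L^2$ of the data, namely $\|\sum_i\E[U_i\otimes\Sigma^{-1/2}\Delta_i]\|_{\mathrm{HS}}^2\le2n\sum_i\E\|U_i\|_{\mathrm{HS}}^2$. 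This controls the non-vanishing deterministic tensor means without a factor $d$.
\end{enumerate}

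The truncation level $\sqrt{e^{2t}-1}$ is what makes every term integrate against $u\,du/(1+u)^3$, with $u=e^{2t}-1$, without a logarithm. For instance, the higher-chaos term is handled by $\int_r^\infty u^{-2}\,du=1/r$ with $r=|\Sigma^{-1/2}\Delta_i|^2/n$.
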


\cref{thm:fourth} follows easily from \cref{prop:fourth}.

\begin{proof}[Proof of \cref{thm:fourth}]
For any $J\subseteq\{1,\ldots,d\}$, applying
\cref{prop:fourth} to the coordinates in $J$ gives
\[
\mathcal W_2^2
\bigl(\mathcal L((W_n)_J),N(0,\Sigma_J)\bigr)
\le
\frac{2}{n^2}\sum_{j\in J}\frac{1}{\sigma_j^2}
\sum_{i=1}^n
\E\left[|X_i'-X_i|^2(X_{ij}'-X_{ij})^2\right].
\]
Let $((W_n)_J,G_J)$ be an optimal coupling, where
$G_J\sim N(0,\Sigma_J)$, and extend this coupling to
the full vector $W_n$ while preserving its law.
Let $G_{J^c}\sim N(0,\Sigma_{J^c})$ be independent
of $(W_n,G_J)$. Since $\Sigma$ is diagonal,
$G=(G_J,G_{J^c})\sim N(0,\Sigma)$. Hence,
\begin{align*}
\mathcal W_2^2\bigl(\mathcal L(W_n),N(0,\Sigma)\bigr)\le& \E|(W_n)_J-G_J|^2
      +\E|(W_n)_{J^c}-G_{J^c}|^2\\=&\mathcal W_2^2
\bigl(\mathcal L((W_n)_J),N(0,\Sigma_J)\bigr)
+2\sum_{j\notin J}\sigma_j^2\\
\leq&
\frac{2}{n^2}\sum_{j\in J}\frac{1}{\sigma_j^2}
\sum_{i=1}^n
\E\left[
|X_i'-X_i|^2(X_{ij}'-X_{ij})^2
\right]
+2\sum_{j\notin J}\sigma_j^2.
\end{align*}
Minimizing over $J$ yields
\[
\mathcal W_2^2\bigl(\mathcal L(W_n),N(0,\Sigma)\bigr)
\le
2\sum_{j=1}^d
\min\left\{
\sigma_j^2,\,
\frac{1}{n^2\sigma_j^2}
\sum_{i=1}^n
\E\left[|X_i'-X_i|^2(X_{ij}'-X_{ij})^2\right]
\right\}.
\]
\end{proof}

\begin{proof}[Proof of \cref{prop:fourth}]

\noindent\textbf{Step 1: $L^p$ transportation inequality.}
In this step, we obtain a new transportation inequality, \cref{prop:transport}, using the F\"ollmer process. See related results in \cite{lehec2013representation}, \cite{eldan2020clt}, \cite{fang2024sharp}, \cite{fang2026central}, \cite{koike2026connections}, \cite{fang2026martingale}. In particular, we follow the proof of \cite[Lemma~5.1]{fang2026martingale} with a crucial modification using the martingale property of the drift of the F\"ollmer process.

\begin{proposition}\label{prop:transport}
Let $p\geq 2$, and let $\mu$ be a centered probability measure on $\IR^d$ with finite $p$th moment. Let $\Sigma$ be a symmetric, positive definite matrix. We can construct two random vectors $W\sim \mu$ and $Z\sim N(0, I_d)$ defined on a common probability space such that
\begin{equation}\label{eq:transport-Wp}
\|W-\Sigma^{1/2} Z\|_p
\le K_p\sqrt{\int_0^\infty
(1-e^{-2t})\|\rho_t(F_t)\|_p^2\,dt},
\end{equation}
where $Z_2\sim N(0, I_d)$ is independent of $W$, $K_p$ is a constant depending only on $p$, $K_2$ can be taken as $2$,
and, for $t>0$,
\ben{\label{eq:rhot}
\rho_t(F_t)
=
e^{-t}\E\left[
W-\frac{\Sigma^{1/2}Z_2}{\sqrt{e^{2t}-1}}
\,\middle|\,F_t
\right],
\qquad
F_t=e^{-t}W+\sqrt{1-e^{-2t}}\,\Sigma^{1/2}Z_2.
}
\end{proposition}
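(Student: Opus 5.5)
We plan to realize the coupling through the F\"ollmer process with covariance $\Sigma$, rewrite the displacement $W-\Sigma^{1/2}Z$ as a stochastic integral against the martingale drift, and then apply Burkholder--Davis--Gundy (BDG) followed by a change of time $s=e^{-2t}$ that turns the drift into $\rho_t(F_t)$.

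\textbf{Construction.} Let $B$ be a standard $d$-dimensional Brownian motion and set
\[
X_s=s\,W+\sqrt{s(1-s)}\,\Sigma^{1/2}G,\qquad s\in[0,1],
\]
for the marginals. The F\"ollmer process is the solution of
\[
dX_s=\Sigma^{1/2}dB_s+v_s(X_s)\,ds,\qquad X_0=0,
\]
with drift $v_s(x)=(1-s)^{-1}\E[W-X_s\mid X_s=x]$. Along the process $v_s:=v_s(X_s)$, and $X_1\sim\mu$.

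We take $W:=X_1$ and $Z:=B_1$, so that
\[
W-\Sigma^{1/2}Z=\int_0^1 v_s\,ds .
\]
For existence and well-posedness of this SDE for general $\mu$, we would first treat $\mu$ convolved with a small Gaussian. There the drift is smooth and Lipschitz on $[0,1-\delta]$, and we pass to the limit using the stability of $\mathcal W_p$ and of the right-hand side. Alternatively, one can use Lehec's entropy-minimizing construction, which also supplies the key fact that $(v_s)$ is a martingale. This fact follows from It\^o's formula and the heat equation satisfied by $\log$ of the density ratio.

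\textbf{Martingale representation.} Since $v_0=\E W=0$, integration by parts gives
\[
\int_0^1 v_s\,ds=\int_0^1(1-s)\,dv_s ,
\]
so the displacement is a martingale transform. For $p=2$, It\^o's isometry and a further integration by parts give
\[
\E\Big|\int_0^1(1-s)\,dv_s\Big|^2=\int_0^1(1-s)^2\,d\E|v_s|^2=\int_0^1 2(1-s)\,\E|v_s|^2\,ds .
\]
The boundary terms vanish because $v_0=0$ and $(1-s)^2=0$ at $s=1$.

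\textbf{Time change.} Dividing $X_s$ by $\sqrt s$ with $\sqrt s=e^{-t}$ shows that $X_s/\sqrt s=F_t$. A direct computation then gives $v_s=e^t\rho_t(F_t)$, with $\rho_t$ as in \cref{eq:rhot}. Using $ds=-2e^{-2t}dt$, we obtain
\[
\int_0^1 2(1-s)\,\E|v_s|^2\,ds=4\int_0^\infty(1-e^{-2t})\,\E|\rho_t(F_t)|^2\,dt,
\]
which yields $K_2=2$.

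\textbf{General $p\ge 2$.} BDG gives
\[
\Big\|\int_0^1(1-s)\,dv_s\Big\|_p\le C_p\,\Big\|\int_0^1(1-s)^2\,d[v]_s\Big\|_{p/2}^{1/2}.
\]
To convert $d[v]$ into $|v|^2\,ds$ we use It\^o's formula $d|v_s|^2=2v_s\cdot dv_s+d[v]_s$ and integrate against $(1-s)^2$:
\[
A:=\int_0^1(1-s)^2\,d[v]_s=\int_0^1 2(1-s)|v_s|^2\,ds-2\int_0^1(1-s)^2\,v_s\cdot dv_s .
\]
The first term is handled in $L^{p/2}$ by Minkowski's inequality, giving $\int 2(1-s)\|v_s\|_p^2\,ds$, which becomes the right-hand side of \cref{eq:transport-Wp} after the time change.

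\textbf{Main obstacle.} The hard part is the martingale term $M=\int_0^1(1-s)^2 v_s\cdot dv_s$. By BDG,
\[
\|M\|_{p/2}\lesssim\Big\|\sup_s\,(1-s)|v_s|\cdot A^{1/2}\Big\|_{p/2}\le\Big\|\sup_s(1-s)|v_s|\Big\|_p\,\|A\|_{p/2}^{1/2},
\]
after which Young's inequality lets us absorb $\|A\|_{p/2}$ into the left-hand side. What remains is a maximal bound for $\sup_s(1-s)|v_s|$; this is not immediate, since $(1-s)v_s$ is not a martingale. We would write
\[
(1-s)v_s=\int_0^s(1-r)\,dv_r-\int_0^s v_r\,dr,
\]
bound the first piece by Doob's inequality (it is again controlled by $A^{1/2}$, hence absorbable with a small constant), and bound the second piece by
\[
\Big\|\int_0^1|v_r|\,dr\Big\|_p\lesssim\Big(\int_0^1(1-r)\|v_r\|_p^2\,dr\Big)^{1/2}+\text{(behaviour near } r=1).
\]
This last step is delicate because the weight $(1-r)$ degenerates at $r=1$. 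If it fails, the fallback is to split the quadratic variation differently, using $|v_s|^2$ itself as a submartingale with Doob's $L^{p/2}$ inequality for $p>2$, or to localize at $s=1-\delta$ and let $\delta\to0$.
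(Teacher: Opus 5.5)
There is a genuine gap, in the case $p>2$. Your construction, the identification $v_s=e^{t}\rho_t(F_t)$ under $s=e^{-2t}$, and the $p=2$ computation via the martingale covariance identity all agree with the paper and give $K_2=2$. One small point: the paper works on $[0,T]$ with $T<1$ and lets $T\uparrow1$ by Fatou, and you should do the same rather than writing $\int_0^1(1-s)\,dv_s$ directly.

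For $p>2$ you leave the argument open, and the sketch you give does not close. After It\^o's formula you must control $M=\int_0^1(1-s)^2v_s\cdot dv_s$, and both pieces of your proposed maximal bound for $(1-s)|v_s|$ fail.
\begin{itemize}
\item The Doob/BDG piece contributes $C_p\|A\|_{p/2}^{1/2}\cdot\|A\|_{p/2}^{1/2}=C_p\|A\|_{p/2}$ to $\|M\|_{p/2}$. This has the same homogeneity as the left-hand side and a constant that is not small, so neither Young's inequality nor absorption is available.
\item The bound $\|\int_0^1|v_r|\,dr\|_p\lesssim(\int_0^1(1-r)\|v_r\|_p^2\,dr)^{1/2}$ cannot hold in general. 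Once the absolute value discards cancellation, Cauchy--Schwarz leaves $\int_0^1dr/(1-r)=\infty$. This is exactly the Minkowski-type loss that the martingale argument is meant to remove; in the application it costs a factor $\sqrt{\log n}$.
\end{itemize}

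The missing idea is to avoid expanding $d[v]$ altogether. On $[0,T]$, integrate by parts against the deterministic weight. Since $[v]_0=0$ and $(T-u)^2$ vanishes at $u=T$,
\[
\int_0^T(T-u)^2\,d[v]_u=2\int_0^T(T-u)[v]_u\,du .
\]
Because $p/2\ge1$, Minkowski's inequality in $L^{p/2}$ moves the norm inside the $du$-integral. For each fixed $u$, the reverse BDG inequality $\mathbb{E}[v]_u^{p/2}\le C_p\mathbb{E}\sup_{r\le u}|v_r|^p$ and Doob's maximal inequality then give
\[
\|[v]_u\|_{p/2}\le C_p\Big\|\sup_{r\le u}|v_r|\Big\|_p^2\le C_p\|v_u\|_p^2 .
\]
Hence $\|\int_0^Tv_u\,du\|_p^2\le K_p\int_0^T(T-u)\|v_u\|_p^2\,du$, with no martingale remainder and no maximal function of $(1-s)|v_s|$. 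Letting $T\uparrow1$ and changing variables finishes the proof.

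Your It\^o route can in fact be repaired. Split $[0,1)$ at the points $1-2^{-k}$ and apply Doob on each piece; this gives $\|\sup_s(1-s)|v_s|\|_p^2\lesssim\int_0^1(1-r)\|v_r\|_p^2\,dr$, after which Young's inequality does absorb. But that is a detour around the short argument above.
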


\begin{proof}[Proof of \cref{prop:transport}]
In this proof, we use $K_p$ to denote constants that depend only on $p$ and may differ in each appearance.
We follow \cite[Proof of Lemma~5.1]{fang2026martingale} and construct
\be{
W-\Sigma^{1/2} Z=\int_0^1 \frac{\Sigma^{1/2}\nabla \log f_u(X_u/\sqrt{u})}{\sqrt{u}} du,
}
where $X=(X_s)_{s\in [0,1]}$ is the F\"ollmer process associated with the law of $\bar W:=\Sigma^{-1/2} W$ and $f_u$ is the density of $X[u]:=\sqrt{u}\bar W+\sqrt{1-u}Z_2$ with respect to the standard Gaussian measure $N(0, I_d)$. 

In \cite[Proof of Lemma~5.1]{fang2026martingale}, the Minkowski inequality was used to obtain
\be{
\|W-\Sigma^{1/2} Z\|_p\leq \int_0^1 \frac{\|\Sigma^{1/2}\nabla \log f_u(X_u/\sqrt{u})\|_p}{\sqrt{u}} du.
}
Here, we make use of the martingale property of $(v_u)_{0\leq u<1}$ where 
\be{
v_u=\Sigma^{1/2}\nabla \log f_u(X_u/\sqrt{u})/\sqrt{u};
}
see \cite[Lemma~11]{lehec2013representation} and \cite[Proposition~2.9]{koike2026connections}. 

Let $v_0=\E W=0$ and $[v]$ be the scalar quadratic variation of $v$. 
For $T<1$, integration by parts and the
Burkholder--Davis--Gundy inequalities give
\begin{align*}
\left\|\int_0^T v_u\,du\right\|_p^2
&=\left\|\int_0^T(T-u)\,dv_u\right\|_p^2\le K_p\left\|
\int_0^T(T-u)^2\,d[v]_u
\right\|_{p/2}\\
&=2K_p\left\|
\int_0^T(T-u)[v]_u\,du
\right\|_{p/2}\le K_p\int_0^T(T-u)\|v_u\|_p^2\,du,
\end{align*}
where the last inequality follows from Minkowski's inequality, the reverse Burkholder--Davis--Gundy inequality, and Doob's maximal inequality.

Since $W-\Sigma^{1/2}Z=\int_0^1 v_u\,du$,
we have $\int_0^T v_u\,du\to W-\Sigma^{1/2}Z$
almost surely as $T\uparrow1$.
Fatou's lemma and monotone convergence then yield
\begin{align*}
\|W-\Sigma^{1/2} Z\|_p^2
&\le K_p\int_0^1(1-u)\|v_u\|_p^2\,du.
\end{align*}
As in \cite[Proof of Lemma~5.1]{fang2026martingale}, we observe that $X_u/\sqrt{u}$ has the same law as $X[u]$ and the score representation for Gaussian convolutions in \cite[Lemma~2]{Bonis2020} gives
\be{
\nabla \log f_u(X[u])=\sqrt{u} \E \left[ \Sigma^{-1/2} W -\sqrt{\frac{u}{1-u}}Z_2  \Big\vert \Sigma^{1/2} X[u]  \right].
}
The proof is complete by the same change of variables 
\be{
u=e^{-2t},\quad t\in [0,\infty)
}
as in \cite[Proof of Lemma~5.1]{fang2026martingale}.
The constant in the case $p=2$ is obtained using the martingale covariance identity
\be{
\E|\int_0^T v_u du|^2=2\int_0^T(T-u)\E|v_u|^2du.
}
\end{proof}

\noindent\textbf{Step 2: Score representation and Bonis expansion.}
In this step, we use exchangeable pairs to represent $\rho_t$ and bound $\|\rho_t(F_t)\|_p^2$ as in \cite{Bonis2020} and \cite{fang2023p}. The result is summarized in the following lemma.

\begin{lemma}\label{lem:bonisexpansion}
Let $p\geq 2$, and
let $(W,W')$ be an exchangeable pair of random vectors (i.e., $\mathcal{L}(W, W')=\mathcal{L}(W', W)$)
in $\R^d$ with finite $p$th moments and
$\Cov(W)=\Sigma$ positive definite.
Let $D:=W'-W$ and suppose that
\[
\E[D\mid W]=-\lambda W
\]
for some $\lambda>0$.
For $t>0$, let $D_t=D\ind_{\{|\Sigma^{-1/2}D|
\leq\sqrt{e^{2t}-1}\}}.$ 
For a random tensor $T$, write
$\|T\|_{p,\mathrm{HS}}=(\mathbb E\|T\|_{\mathrm{HS}}^p)^{1/p}$.
Then, for every $t>0$,
\begin{equation}\label{eq:bonisexpansion}
\begin{aligned}
 e^{2t}\|\rho_t(F_t)\|_p^2
 &\le \frac{1}{\lambda^2}
       \|\mathbb E[D-D_t\mid W]\|_p^2\\
 &\quad+\frac{p-1}{e^{2t}-1}
       \left\|\left(\frac{\mathbb E[D_tD_t^\top\mid W]}{2\lambda}
                  -\Sigma\right)\Sigma^{-1/2}\right\|_{p,\mathrm{HS}}^2\\
 &\quad+\frac{1}{4\lambda^2}\sum_{k=3}^{\infty}
       \frac{\bigl(1-k(k-1)/4\bigr)^2(p-1)^k}
            {k!(e^{2t}-1)^k}
       \left\|\mathbb E\!\left[
          D_t\otimes(\Sigma^{-1/2}D_t)^{\otimes k}\mid W
       \right]\right\|_{p,\mathrm{HS}}^2.
\end{aligned}
\end{equation}
\end{lemma}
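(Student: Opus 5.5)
We follow the Bonis strategy in the form of \cite{Bonis2020} and \cite{fang2023p}. Write $s=\sqrt{e^{2t}-1}$, so that $F_t=e^{-t}(W+\Sigma^{1/2}Z_2 s)$ up to the rescaling $e^{-t}\sqrt{1-e^{-2t}}=e^{-t}\cdot e^{-t}s\cdot e^{t}$; precisely, $e^{t}F_t=W+s\,\Sigma^{1/2}Z_2$. Then $e^{t}\rho_t(F_t)=\E[W-\Sigma^{1/2}Z_2/s\mid F_t]$. The term $W$ is rewritten with the exchangeable pair: since $\E[D\mid W]=-\lambda W$, we have $W=-\lambda^{-1}\E[D\mid W]$. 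We split $D=(D-D_t)+D_t$; the untruncated part contributes $\lambda^{-1}\E[D-D_t\mid W]$. Conditional Jensen (conditioning on $F_t$ is a contraction in $L^p$) gives the first term of \cref{eq:bonisexpansion}, after $(a+b)^2$ is handled as explained below.

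For the truncated part, we use antisymmetry. For any test function $g$, exchangeability gives $\E[D_t\, g(W+s\Sigma^{1/2}Z_2)]=-\tfrac12\E[D_t(g(W'+s\Sigma^{1/2}Z_2)-g(W+s\Sigma^{1/2}Z_2))]$, because $D_t$ is antisymmetric in $(W,W')$. Conditioning on $F_t$ means testing against such $g$, so we work with Gaussian smoothing. Let $\phi$ be the standard normal density and write the conditional expectation given $e^tF_t=y$ via the density of $Z_2$. The shift $W\mapsto W'=W+D$ corresponds to shifting $Z_2$ by $-\Sigma^{-1/2}D_t/s$. We then expand the ratio $\phi(z-h)/\phi(z)$ in Hermite polynomials, with $h=\Sigma^{-1/2}D_t/s$, which satisfies $|h|\le1$ by truncation:
\[
\frac{\phi(z-h)}{\phi(z)}=\sum_{k\ge0}\frac{\langle H_k(z),h^{\otimes k}\rangle}{k!}.
\]
This yields
\[
e^{t}\rho_t=\E\Big[\sum_{k\ge1}\frac{-1}{2\lambda\,k!}\,\E\big[D_t\otimes h^{\otimes k}\mid W\big]\cdot H_k(Z_2)-\frac{\Sigma^{1/2}Z_2}{s}\ \Big|\ F_t\Big].
\]
The $k=1$ term combines with $-\Sigma^{1/2}Z_2/s$ into $\bigl(\E[D_tD_t^\top\mid W]/(2\lambda)-\Sigma\bigr)\Sigma^{-1/2}Z_2/s$.

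The $k=2$ and higher even/odd terms need care. The coefficient $(1-k(k-1)/4)$ suggests that one more symmetrization is used: the factor $\tfrac12$ from antisymmetry is paired with a second-order Taylor-type correction. Following Bonis, one averages the expansions at $W$ and at $W'$, which produces $\tfrac12\bigl(H_k(z)-\text{derivative terms}\bigr)$. The $k=2$ coefficient vanishes ($1-2/4\cdot\ldots$, checked to be zero after this averaging), and the $k$th coefficient becomes $(1-k(k-1)/4)/(2\lambda k!)$.

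Finally, the $L^p$ norms are controlled by hypercontractivity of the Ornstein--Uhlenbeck semigroup. Conditioning on $W$, the variable $Z_2$ remains standard Gaussian. For each $k$, the function $A_k\cdot H_k(Z_2)$ lies in the $k$th Wiener chaos, and its $L^p$ norm is at most $(p-1)^{k/2}$ times its $L^2$ norm, which equals $\sqrt{k!}\,\|A_k\|_{\mathrm{HS}}$. Orthogonality of the chaoses, combined with Bessel's inequality, bounds the square norm of the whole sum by the sum of the weighted squares. Here we apply the chaos-wise $L^p$ bound inside the $L^2$ structure via the hypercontractive form $\|\sum_k P_k\|_p\le\|\sum_k(p-1)^{k/2}P_k\|_2$ and take the $L^p(W)$ norm outside. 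This gives exactly the weights $(p-1)^k/(k!\,s^{2k})$ appearing in the statement. The cross term between the $\E[D-D_t\mid W]$ part and the chaos part is handled by treating the former as the zeroth chaos; the same Bessel/orthogonality argument then produces a sum of squares with no factor $2$.

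The main obstacle is this last point: making the $p>2$ hypercontractivity argument yield a clean sum of squares with coefficients $(p-1)^k$, while also verifying the exact $(1-k(k-1)/4)^2$ coefficient from the double symmetrization.
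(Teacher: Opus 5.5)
Your outline follows the paper's route. You represent $e^t\rho_t(F_t)=\mathbb E[R_t\mid F_t]$ via the exchangeable pair and a Gaussian change of variables, apply conditional Jensen, bound $(\mathbb E[|R_t|^p\mid W])^{2/p}$ chaos by chaos conditionally on $W$, and finish with Minkowski's inequality in $L^{p/2}$. The step you call the main obstacle is actually fine as written. The inequality $\|\sum_k P_k\|_p\le\|\sum_k(p-1)^{k/2}P_k\|_2$ is Nelson hypercontractivity, and it holds for $\mathbb R^d$-valued functions since $|P_\tau F|\le P_\tau|F|$. Orthogonality of the chaoses then gives the weighted sum of squares (the $\E[D-D_t\mid W]$ term being zeroth chaos, so no cross term). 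Finally, $\|\sum_k a_k(W)\|_{p/2}\le\sum_k\|a_k(W)\|_{p/2}$ produces the $\|\cdot\|_{p,\mathrm{HS}}^2$ terms. The paper uses a lemma of Bonis for exactly this.

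The genuine gap is in the identity for $R_t$, which the paper imports from the calculation of Fang and Koike. Your single antisymmetrization plus the generating function $\phi(z-h)/\phi(z)=\sum_k\langle H_k(z),h^{\otimes k}\rangle/k!$ gives chaos coefficients $1/(2\lambda k!)$ for every $k\ge1$. This leaves a second-chaos term $\frac{1}{4\lambda s^2}\mathbb E[D_t\langle H_2(Z_2),(\Sigma^{-1/2}D_t)^{\otimes 2}\rangle\mid W,Z_2]$, which is absent from the statement, and it gives $1/k!$ instead of $(1-k(k-1)/4)/k!$ for $k\ge3$. Your remark that the $k=2$ coefficient vanishes is wrong, since $1-k(k-1)/4=1/2$ at $k=2$.

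What is missing is a second use of exchangeability, applied to the $k=2$ term only, using that $D_t\otimes D_t\otimes D_t$ is antisymmetric under $W\leftrightarrow W'$. Set $h=\Sigma^{-1/2}D_t/s$ and $T_2(w)=\mathbb E_{Z_2}[g(w+s\Sigma^{1/2}Z_2)H_2(Z_2)]$. Using $H_2(z-h)\phi(z-h)/\phi(z)=\sum_{j\ge0}H_{2+j}(z)[h^{\otimes j}]/j!$ (contracting the last $j$ slots), one gets
\[
\mathbb E\bigl[D_t\langle T_2(W),h^{\otimes2}\rangle\bigr]
=-\tfrac12\,\mathbb E\bigl[D_t\langle T_2(W')-T_2(W),h^{\otimes2}\rangle\bigr]
=-\tfrac12\sum_{j\ge1}\frac{1}{j!}\,\mathbb E\bigl[g(e^tF_t)\,D_t\langle H_{2+j}(Z_2),h^{\otimes(2+j)}\rangle\bigr].
\]
So the $k=2$ term disappears entirely and is redistributed onto each $k\ge3$ with the extra coefficient $-\frac14\cdot\frac{1}{(k-2)!}=-\frac{k(k-1)}{4}\cdot\frac1{k!}$. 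This is where $(1-k(k-1)/4)/k!$ comes from, and it applies only to $k\ge3$.

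You also need to fix a sign. Since $W=-\lambda^{-1}\mathbb E[D\mid W]$ and $\mathbb E[D_tg]=-\frac12\mathbb E[D_t(g(W'+\cdot)-g(W+\cdot))]$, the chaos terms carry $+1/(2\lambda)$. With your $-1/(2\lambda)$, the $k=1$ term would not combine with $-\Sigma^{1/2}Z_2/s$ into $\bigl(\mathbb E[D_tD_t^\top\mid W]/(2\lambda)-\Sigma\bigr)\Sigma^{-1/2}Z_2/s$.
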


\begin{proof}[Proof of \cref{lem:bonisexpansion}]
Let $Z_2\sim N(0,I_d)$ be independent of $(W, W')$ as in \cref{eq:rhot}. Write $Z=Z_2$ in this proof.
Applying the calculation in \citet[(6.6)--(6.12)]{fang2023p} to
$(\Sigma^{-1/2}W,\Sigma^{-1/2}W')$
and
multiplying the resulting vector identity by $\Sigma^{1/2}$ gives
\begin{equation*}
   e^t\rho_t(F_t)=\mathbb E[R_t\mid F_t],
\end{equation*}
where
\[
\begin{aligned}
 R_t
 &=-\frac1\lambda\mathbb E[D-D_t\mid W]\\
 &\quad+\frac1{\sqrt{e^{2t}-1}}
       \left(\frac{\mathbb E[D_tD_t^\top\mid W]}{2\lambda}-\Sigma\right)
                         \Sigma^{-1/2}Z\\
 &\quad+\frac1{2\lambda}\sum_{k=3}^{\infty}
       \frac{1-\frac{k(k-1)}4}{k!(e^{2t}-1)^{k/2}}
       \mathbb E\left[
        D_t\left\langle H_k(Z),(\Sigma^{-1/2}D_t)^{\otimes k}\right\rangle
                  \,\middle|\,W,Z\right]
\end{aligned}
\]
and $H_k(z)=(-1)^k e^{|z|^2/2}\nabla^k e^{-|z|^2/2}$. Therefore,
\ben{\label{eq:condjensen}
\|e^t \rho_t(F_t)\|_p^2\leq \|R_t\|_p^2=\|(\E[|R_t|^p \vert W])^{2/p} \|_{p/2}.
}
Conditionally on $W$, the terms in $R_t$ belong to Gaussian Hermite
chaoses of orders $0,1,3,4,\ldots$. We then apply
\citet[Lemma~3]{Bonis2020} to bound 
\besn{\label{eq:bonislemma3}
(\E[|R_t|^p \vert W])^{2/p}\leq &\frac{1}{\lambda^{2}}|\mathbb E[D-D_t\mid W]|^2\\
& +\frac{p-1}{e^{2t}-1}
 \left\|\left(\frac{\mathbb E[D_tD_t^\top\mid W]}{2\lambda}
                  -\Sigma\right)\Sigma^{-1/2}\right\|_{\mathrm{HS}}^2\\
&+ \frac{1}{4\lambda^2}\sum_{k=3}^{\infty}
       \frac{\bigl(1-k(k-1)/4\bigr)^2(p-1)^k}
            {k!(e^{2t}-1)^k}
       \left\|\mathbb E\!\left[
          D_t\otimes(\Sigma^{-1/2}D_t)^{\otimes k}\mid W
       \right]\right\|_{\mathrm{HS}}^2
}
Combining \cref{eq:condjensen} and \cref{eq:bonislemma3} proves \eqref{eq:bonisexpansion}.
\end{proof}

\noindent\textbf{Step 3: Bounding by centering, Rosenthal's inequality,
and Bessel's inequality.}
To bound the right-hand side of \cref{eq:bonisexpansion}, we follow the independent-sum centering argument in
\citet[Sections~5 and~6]{Bonis2020}. The centered terms are bounded by
the multidimensional version of Rosenthal inequality as in \cite{Bonis2020}. The difference is that we use the Bessel
inequality (see \cref{eq:sum-bessel}) for the deterministic tensor means.

We assume without loss of generality that $\Sigma$ is positive definite and that the right-hand side of \eqref{conj} is finite.
Below, we use $C_p$ to denote positive constants that depend only  on $p$ and may
change from line to line.

We use the standard construction of exchangeable pairs for sums of independent random vectors. 
Let $(X_1',\ldots,X_n')$ be an independent copy of $(X_1,\ldots,X_n)$, and let $I$ be uniform on $\{1,\ldots,n\}$ and independent of both vectors. Set
\[
 W=W_n,\qquad W'=W+D,\qquad
 D=\frac{X_I'-X_I}{\sqrt n},\qquad \lambda=\frac1n.
\]
It can be computed that $\mathbb E[D\mid W]=-\lambda W$. Write
\[
 \Delta_i=X_i'-X_i,\qquad
 \Delta_{i,t}=\Delta_i
 \mathbf1_{\{|\Sigma^{-1/2}\Delta_i|^2\le n(e^{2t}-1)\}},
 \qquad D_t=\frac{\Delta_{I,t}}{\sqrt n}.
\]
We will apply the Bessel inequality in the following form.
Note that
\[
 \mathbb E[(\Sigma^{-1/2}\Delta_I)(\Sigma^{-1/2}\Delta_I)^\top]=2I_d.
\]
Thus $(\Sigma^{-1/2}\Delta_I)_j/\sqrt2$, $j=1,\ldots,d$, form an
orthonormal family in $L^2$. For arbitrary square-integrable random
tensors $U_i$ of the same order across $i$, each depending on $(X_i,X_i')$, Bessel's
inequality gives
\begin{equation}\label{eq:sum-bessel}
 \left\|\sum_{i=1}^n
       \mathbb E[U_i\otimes\Sigma^{-1/2}\Delta_i]
                   \right\|_{\mathrm{HS}}^2
       \le 2n\sum_{i=1}^n\mathbb E\|U_i\|_{\mathrm{HS}}^2.
\end{equation}
Indeed, for each scalar coordinate $U_{i,\alpha}$,
\[
\begin{aligned}
 \frac1{2n^2}\sum_{j=1}^d
   \left(\sum_{i=1}^n
     \mathbb E[U_{i,\alpha}(\Sigma^{-1/2}\Delta_i)_j]\right)^2
 &=\sum_{j=1}^d\left|
     \mathbb E\left[U_{I,\alpha}
             \frac{(\Sigma^{-1/2}\Delta_I)_j}{\sqrt2}\right]\right|^2\\
 &\le\mathbb E|U_{I,\alpha}|^2
   =\frac1n\sum_{i=1}^n\mathbb E|U_{i,\alpha}|^2.
\end{aligned}
\]
Summing over $\alpha$ proves \eqref{eq:sum-bessel}.

We first consider the first term on the right-hand side of \cref{eq:bonisexpansion}.
By symmetry, $\mathbb E[\Delta_i-\Delta_{i,t}]=0$.
Conditioning first on $(X_1,\ldots,X_n)$ and applying the multidimensional version of
Rosenthal's inequality of \citet[Theorem~5.2]{Pinelis1994}, as in
\citet[Section~6]{Bonis2020}, gives
\begin{equation}\label{eq:firstline}
\begin{aligned}
 \frac1{\lambda^2}\|\mathbb E[D-D_t\mid W]\|_p^2
 &\le\frac1n
       \left\|\sum_{i=1}^n
             \mathbb E[\Delta_i-\Delta_{i,t}\mid X_i]\right\|_p^2\\
 &\le\frac{C_p}{n}\left\{
       \sum_{i=1}^n\mathbb E|\Delta_i-\Delta_{i,t}|^2
       +\left(\sum_{i=1}^n\mathbb E|\Delta_i-\Delta_{i,t}|^p
          \right)^{2/p}\right\}\\
 &\le\frac{C_p}{n}\left\{
       \sum_{i=1}^n\mathbb E|\Delta_i-\Delta_{i,t}|^2
       +\left(\frac{1}{n(e^{2t}-1)}\sum_{i=1}^n\mathbb E[|\Delta_i|^p |\Sigma^{-1/2}\Delta_i|^2]
          \right)^{2/p}\right\},
\end{aligned}
\end{equation}
where we used truncation in the last inequality.

To prepare for the following arguments, we also observe that the same Rosenthal inequality gives, for every $k\ge1$,
\besn{\label{eq:rosenthalk}
 &\left\|\sum_{i=1}^n\left\{
       \mathbb E[\Delta_{i,t}\otimes(\Sigma^{-1/2}\Delta_{i,t})^{\otimes k}
                      \mid X_i]
       -\mathbb E[\Delta_{i,t}\otimes(\Sigma^{-1/2}\Delta_{i,t})^{\otimes k}]
                       \right\}\right\|_{p,\mathrm{HS}}^2\\
 &\quad\le C_p\left\{
       \sum_{i=1}^n\mathbb E[
           |\Delta_{i,t}|^2|\Sigma^{-1/2}\Delta_{i,t}|^{2k}]
       +\left(\sum_{i=1}^n\mathbb E[
           |\Delta_{i,t}|^p|\Sigma^{-1/2}\Delta_{i,t}|^{kp}]
        \right)^{2/p}\right\}.
}

Next, we consider the second term on the right-hand side of \cref{eq:bonisexpansion}. Since $\sum_{i=1}^n\mathbb E[\Delta_i\Delta_i^\top]=2n\Sigma$,
centering the conditional truncated matrices leaves the deterministic
bias
\[
 -\frac1{2n}\sum_{i=1}^n\mathbb E[
           (\Delta_i-\Delta_{i,t})\otimes\Sigma^{-1/2}\Delta_i].
\]
Taking $U_i=\Delta_i-\Delta_{i,t}$ in \cref{eq:sum-bessel} gives the truncation-bias bound
\[
 \left\|\sum_{i=1}^n\mathbb E[
      (\Delta_i-\Delta_{i,t})\otimes\Sigma^{-1/2}\Delta_i]
                  \right\|_{\mathrm{HS}}^2
       \le 2n\sum_{i=1}^n\mathbb E|\Delta_i-\Delta_{i,t}|^2.
\]
Combining the truncation-bias bound with the centered-sum estimate \cref{eq:rosenthalk} with $k=1$ yields
\begin{equation}\label{eq:secondline}
\begin{aligned}
 &\left\|\left(\frac{\mathbb E[D_tD_t^\top\mid W]}{2\lambda}
                      -\Sigma\right)\Sigma^{-1/2}
              \right\|_{p,\mathrm{HS}}^2\\
 &\quad\le\frac{C_p}{n^2}\left\{
       \sum_{i=1}^n\mathbb E[
             |\Delta_{i,t}|^2|\Sigma^{-1/2}\Delta_{i,t}|^2]
       +n\sum_{i=1}^n\mathbb E|\Delta_i-\Delta_{i,t}|^2\right.\\
 &\hspace{8em}\left.
       +\left(\sum_{i=1}^n\mathbb E[
             |\Delta_{i,t}|^p|\Sigma^{-1/2}\Delta_{i,t}|^p]
         \right)^{2/p}\right\}\\
&\quad\le\frac{C_p}{n^2}\left\{
       \sum_{i=1}^n\mathbb E[
             |\Delta_{i,t}|^2|\Sigma^{-1/2}\Delta_{i,t}|^2]
       +n\sum_{i=1}^n\mathbb E|\Delta_i-\Delta_{i,t}|^2\right.\\
 &\hspace{8em}\left.
       +\left([n(e^{2t}-1)]^{p/2-1}\sum_{i=1}^n\mathbb E[
             |\Delta_{i}|^p|\Sigma^{-1/2}\Delta_{i}|^2]
         \right)^{2/p}\right\},
\end{aligned}
\end{equation}
where we again used truncation in the last inequality.

Finally, we consider the last line on the right-hand side of \cref{eq:bonisexpansion}. 
Taking
$U_i=\Delta_{i,t}\otimes(\Sigma^{-1/2}\Delta_{i,t})^{\otimes(k-1)}$ in \cref{eq:sum-bessel}
gives, for $k\ge3$,
\[
 \left\|\sum_{i=1}^n\mathbb E[
      \Delta_{i,t}\otimes(\Sigma^{-1/2}\Delta_{i,t})^{\otimes k}]
                  \right\|_{\mathrm{HS}}^2\le 2n\sum_{i=1}^n\mathbb E[
                |\Delta_{i,t}|^2|\Sigma^{-1/2}\Delta_{i,t}|^{2k-2}].
\]
Together with \cref{eq:rosenthalk}, we obtain, for $k\ge3$,
\besn{\label{eq:3rdtemp}
 &\left\|\mathbb E\left[\sum_{i=1}^n
       \Delta_{i,t}\otimes(\Sigma^{-1/2}\Delta_{i,t})^{\otimes k}
                      \,\middle|\,W\right]\right\|_{p,\mathrm{HS}}^2\\
 &\quad\le C_p\left\{
       \sum_{i=1}^n\mathbb E\left[
        |\Delta_{i,t}|^2\left(
           |\Sigma^{-1/2}\Delta_{i,t}|^{2k}
             +2n|\Sigma^{-1/2}\Delta_{i,t}|^{2k-2}\right)\right]
       \right.\\
 &\hspace{6em}\left.
       +\left(\sum_{i=1}^n\mathbb E[
          |\Delta_{i,t}|^p|\Sigma^{-1/2}\Delta_{i,t}|^{kp}]
         \right)^{2/p}\right\}.
}

Elementary truncation gives, for $k\ge1$,
\[
\begin{aligned}
 \sum_{i=1}^n\mathbb E[
       |\Delta_{i,t}|^p|\Sigma^{-1/2}\Delta_{i,t}|^{kp}]
 &\le[n(e^{2t}-1)]^{kp/2-1}
       \sum_{i=1}^n\mathbb E[|\Delta_i|^p|\Sigma^{-1/2}\Delta_i|^2].
\end{aligned}
\]
For $k\ge3$, the powers $2k$ and $2k-2$ in \cref{eq:3rdtemp}
are similarly bounded by
$[n(e^{2t}-1)]^{k-2}|\Sigma^{-1/2}\Delta_{i,t}|^4$ and
$[n(e^{2t}-1)]^{k-3}|\Sigma^{-1/2}\Delta_{i,t}|^4$, respectively.
Using
\[
 \mathbb E[D_t\otimes(\Sigma^{-1/2}D_t)^{\otimes k}\mid W]
 =\frac1{n^{(k+3)/2}}\mathbb E\left[
       \sum_{i=1}^n\Delta_{i,t}\otimes
                (\Sigma^{-1/2}\Delta_{i,t})^{\otimes k}
                 \,\middle|\,W\right],
\]
we therefore obtain
\begin{equation}\label{eq:thirdline}
\begin{aligned}
 &\frac1{4\lambda^2}\sum_{k=3}^{\infty}
       \frac{\left(1-\frac{k(k-1)}4\right)^2(p-1)^k}{k!(e^{2t}-1)^k}
       \left\|\mathbb E[
         D_t\otimes(\Sigma^{-1/2}D_t)^{\otimes k}\mid W]
                    \right\|_{p,\mathrm{HS}}^2\\
 &\quad\le\frac{C_p}{n^3}
       \left(\frac1{(e^{2t}-1)^2}+\frac1{(e^{2t}-1)^3}\right)
       \sum_{i=1}^n\mathbb E[
              |\Delta_{i,t}|^2|\Sigma^{-1/2}\Delta_{i,t}|^4]\\
 &\qquad+\frac{C_p}{n^{1+2/p}(e^{2t}-1)^{2/p}}
       \left(\sum_{i=1}^n\mathbb E[
               |\Delta_i|^p|\Sigma^{-1/2}\Delta_i|^2]\right)^{2/p}.
\end{aligned}
\end{equation}
Together with \cref{eq:firstline,eq:secondline,eq:thirdline}, \eqref{eq:bonisexpansion} implies
\besn{\label{eq:pointwisebound}
 e^{2t}\|\rho_t(F_t)\|_p^2
 &\le \frac{C_p}{n}\left(1+\frac1{e^{2t}-1}\right)
         \sum_{i=1}^n\mathbb E|\Delta_i-\Delta_{i,t}|^2\\
 &\quad+\frac{C_p}{n^2(e^{2t}-1)}
         \sum_{i=1}^n\mathbb E[
             |\Delta_{i,t}|^2|\Sigma^{-1/2}\Delta_{i,t}|^2]\\
 &\quad+\frac{C_p}{n^3}
       \left(\frac1{(e^{2t}-1)^2}+\frac1{(e^{2t}-1)^3}\right)
         \sum_{i=1}^n\mathbb E[
             |\Delta_{i,t}|^2|\Sigma^{-1/2}\Delta_{i,t}|^4]\\
 &\quad+\frac{C_p}{n^{1+2/p}(e^{2t}-1)^{2/p}}
         \left(\sum_{i=1}^n\mathbb E[
             |\Delta_i|^p|\Sigma^{-1/2}\Delta_i|^2]\right)^{2/p}.
}

\noindent\textbf{Step 4: Integrating the bound.}
It remains to integrate the pointwise bound in \cref{eq:pointwisebound} to control
\be{
\int_0^\infty(1-e^{-2t})\|\rho_t(F_t)\|_p^2\,dt.
}
We will use the change of
variables $u=e^{2t}-1$ and
\[
 (1-e^{-2t})e^{-2t}\,dt=\frac{u}{2(1+u)^3}\,du.
\]

For the first term in \cref{eq:pointwisebound},
we have
\begin{align*}
 &\frac{C_p}{n}\int_0^\infty
 (1-e^{-2t})e^{-2t}
 \left(1+\frac{1}{e^{2t}-1}\right)
 \sum_{i=1}^n\mathbb E|\Delta_i-\Delta_{i,t}|^2\,dt\\
=&\frac{C_p}{2n}\sum_{i=1}^n
 \mathbb E\left[
 |\Delta_i|^2\int_0^{|\Sigma^{-1/2}\Delta_i|^2/n}
 \frac{du}{(1+u)^2}\right]\\
=&\frac{C_p}{2n^2}\sum_{i=1}^n
 \mathbb E\left[
 \frac{|\Delta_i|^2|\Sigma^{-1/2}\Delta_i|^2}
 {1+|\Sigma^{-1/2}\Delta_i|^2/n}\right]\\
\quad&\le\frac{C_p}{2n^2}\sum_{i=1}^n
 \mathbb E\left[|\Delta_i|^2|\Sigma^{-1/2}\Delta_i|^2\right].
\end{align*}

For the second term in \cref{eq:pointwisebound},
we have
\begin{align*}
 &\frac{C_p}{n^2}\int_0^\infty
 \frac{(1-e^{-2t})e^{-2t}}{e^{2t}-1}
 \sum_{i=1}^n
 \mathbb E\left[|\Delta_{i,t}|^2|\Sigma^{-1/2}\Delta_{i,t}|^2\right]\,dt\\
 &\quad=\frac{C_p}{2n^2}\sum_{i=1}^n
 \mathbb E\left[
 |\Delta_i|^2|\Sigma^{-1/2}\Delta_i|^2
 \int_{|\Sigma^{-1/2}\Delta_i|^2/n}^\infty
 \frac{du}{(1+u)^3}\right]\\
 &\quad=\frac{C_p}{4n^2}\sum_{i=1}^n
 \mathbb E\left[
 \frac{|\Delta_i|^2|\Sigma^{-1/2}\Delta_i|^2}
 {(1+|\Sigma^{-1/2}\Delta_i|^2/n)^2}\right]\\
 &\quad\le\frac{C_p}{4n^2}\sum_{i=1}^n
 \mathbb E\left[|\Delta_i|^2|\Sigma^{-1/2}\Delta_i|^2\right].
\end{align*}

For the third term in \cref{eq:pointwisebound},
since
\[
 \frac{u}{(1+u)^3}\left(\frac1{u^2}+\frac1{u^3}\right)
 =\frac{1}{u^2(1+u)^2},
\]
we obtain
\begin{align*}
 &\frac{C_p}{n^3}\int_0^\infty
 (1-e^{-2t})e^{-2t}
 \left(\frac{1}{(e^{2t}-1)^2}+\frac{1}{(e^{2t}-1)^3}\right)\\
 &\hspace{28mm}\times\sum_{i=1}^n
 \mathbb E\left[|\Delta_{i,t}|^2|\Sigma^{-1/2}\Delta_{i,t}|^4\right]\,dt\\
 &\quad=\frac{C_p}{2n^3}\sum_{i=1}^n
 \mathbb E\left[
 |\Delta_i|^2|\Sigma^{-1/2}\Delta_i|^4
 \int_{|\Sigma^{-1/2}\Delta_i|^2/n}^\infty
 \frac{du}{u^2(1+u)^2}\right]\\
 &\quad\le\frac{C_p}{2n^3}\sum_{i=1}^n
 \mathbb E\left[
 |\Delta_i|^2|\Sigma^{-1/2}\Delta_i|^4
 \int_{|\Sigma^{-1/2}\Delta_i|^2/n}^\infty
 \frac{du}{u^2}\right]\\
 &\quad=\frac{C_p}{2n^2}\sum_{i=1}^n
 \mathbb E\left[|\Delta_i|^2|\Sigma^{-1/2}\Delta_i|^2\right].
\end{align*}

For $p\ge2$ and $u>0$, concavity gives
\[
 u^{1-2/p}\le \frac2p+\left(1-\frac2p\right)u.
\]
Consequently, for the fourth term in \cref{eq:pointwisebound}, we have
\begin{align*}
 \int_0^\infty\frac{u^{1-2/p}}{(1+u)^3}\,du
 &\le\frac2p\int_0^\infty\frac{du}{(1+u)^3}
 +\left(1-\frac2p\right)\int_0^\infty\frac{u\,du}{(1+u)^3}
 =\frac12.
\end{align*}
It follows that
\begin{align*}
 &\frac{C_p}{n^{1+2/p}}
 \left(\sum_{i=1}^n
 \mathbb E[|\Delta_i|^p|\Sigma^{-1/2}\Delta_i|^2]\right)^{2/p}
 \int_0^\infty
 \frac{(1-e^{-2t})e^{-2t}}{(e^{2t}-1)^{2/p}}\,dt\\
 &\quad=\frac{C_p}{2n^{1+2/p}}
 \left(\sum_{i=1}^n
 \mathbb E[|\Delta_i|^p|\Sigma^{-1/2}\Delta_i|^2]\right)^{2/p}
 \int_0^\infty\frac{u^{1-2/p}}{(1+u)^3}\,du\\
 &\quad\le\frac{C_p}{4n^{1+2/p}}
 \left(\sum_{i=1}^n
 \mathbb E[|\Delta_i|^p|\Sigma^{-1/2}\Delta_i|^2]\right)^{2/p}.
\end{align*}
Summing these four estimates, enlarging $C_p$, and using
$|\Sigma^{-1/2}\Delta_i|^2=\Delta_i^\top\Sigma^{-1}\Delta_i$, we conclude that
\begin{equation}\label{eq:score-bound-Wp}
\begin{aligned}
 \int_0^\infty(1-e^{-2t})\|\rho_t(F_t)\|_p^2\,dt
 \le&\frac{C_p}{n^2}\sum_{i=1}^n
 \mathbb E[|\Delta_i|^2\Delta_i^\top\Sigma^{-1}\Delta_i]\\
 &+\frac{C_p}{n^{1+2/p}}
 \left(\sum_{i=1}^n
 \mathbb E[|\Delta_i|^p\Delta_i^\top\Sigma^{-1}\Delta_i]\right)^{2/p}.
\end{aligned}
\end{equation}
Combining \cref{eq:score-bound-Wp} with \eqref{eq:transport-Wp} yields
\[
\begin{aligned}
 \mathcal W_p\bigl(\mathcal L(W_n),N(0,\Sigma)\bigr)
 \le\frac{C_p}{\sqrt n}\biggl\{&
  \left(\frac1n\sum_{i=1}^n
       \mathbb E[|\Delta_i|^2\Delta_i^\top\Sigma^{-1}\Delta_i]
       \right)^{1/2}\\
 &+\left(\frac1n\sum_{i=1}^n
       \mathbb E[|\Delta_i|^p\Delta_i^\top\Sigma^{-1}\Delta_i]
       \right)^{1/p}\biggr\}.
\end{aligned}
\]
This proves \eqref{conj} for a constant depending only on the fixed
$p$.

\paragraph{The constant when $p=2$.}
For $p=2$, we use exact variance decompositions instead of Rosenthal's
inequality. Applying
variance decompositions and \eqref{eq:sum-bessel} in
\eqref{eq:bonisexpansion}, and using
\[
 \sum_{k=3}^{\infty}\frac{\left(1-\frac{k(k-1)}4\right)^2}{k!}
                =\frac{15e-34}{16}<\frac12,
\]
we obtain
\[
\begin{aligned}
 e^{2t}\mathbb E|\rho_t(F_t)|^2
 &\le\frac1n\left(1+\frac1{2(e^{2t}-1)}\right)
            \sum_{i=1}^n\mathbb E|\Delta_i-\Delta_{i,t}|^2\\
 &\quad+\frac1{4n^2(e^{2t}-1)}\sum_{i=1}^n
          \mathbb E[|\Delta_{i,t}|^2|\Sigma^{-1/2}\Delta_{i,t}|^2]\\
 &\quad+\frac1{8n^3}
          \left(\frac1{(e^{2t}-1)^2}+\frac2{(e^{2t}-1)^3}\right)
           \sum_{i=1}^n
          \mathbb E[|\Delta_{i,t}|^2|\Sigma^{-1/2}\Delta_{i,t}|^4].
\end{aligned}
\]
The transportation constant $K_2=2$ in \cref{eq:transport-Wp} gives the integration weight
$2u\,du/(1+u)^3$. For $r>0$,
\[
\begin{aligned}
 \int_0^r\frac{2u+1}{(1+u)^3}\,du&\le r,\\
 \frac12\int_r^\infty\frac{du}{(1+u)^3}
       &=\frac1{4(1+r)^2}\le\frac14,\\
 \frac14\int_r^\infty\frac{u+2}{u^2(1+u)^3}\,du
       &\le\frac12\int_r^\infty\frac{du}{u^2}=\frac1{2r}.
\end{aligned}
\]
Applying these inequalities with $r=|\Sigma^{-1/2}\Delta_i|^2/n$ gives
\[
\begin{aligned}
 \mathcal W_2^2\bigl(\mathcal L(W_n),N(0,\Sigma)\bigr)
 &\le4\int_0^\infty(1-e^{-2t})\mathbb E|\rho_t(F_t)|^2\,dt\\
 &\le\frac{1+1/4+1/2}{n^2}\sum_{i=1}^n
          \mathbb E[|\Delta_i|^2\Delta_i^\top\Sigma^{-1}\Delta_i]\\
 &\le\frac2{n^2}\sum_{i=1}^n
          \mathbb E[|\Delta_i|^2\Delta_i^\top\Sigma^{-1}\Delta_i].
\end{aligned}
\]
\end{proof}

\section{Proofs of other results}\label{sec:proof2}

We first prove the dimension-free bound stated in \cref{cor:Hilbert}.
\begin{proof}[Proof of \cref{cor:Hilbert}]
In this proof, we use $C$ to denote universal constants, which may differ from line to line.
We may assume that
$0<\tr(Q^{1/2})<\infty$, since otherwise the bound is trivial.

We first consider the finite-dimensional case.
Let
$\Sigma_i=\Cov(X_i)$. By a straightforward computation using independence, we obtain
\besn{\label{eq:expandQ}
&\E\left[|X_i'-X_i|^2(X_i'-X_i)^{\otimes2}\right]=2\E\left[|X_i|^2X_i^{\otimes2}\right]
 +2(\tr\Sigma_i)\Sigma_i+4\Sigma_i^2.
}
This implies $Q\succeq 4\Sigma_i^2/n$ and any vector $v$ in the kernel of $Q$ satisfies $\langle X_i, v\rangle =0$.
Therefore, each $X_i$ lies in the range of $Q$ almost surely.
We work on the subspace of the range of $Q$, on which $Q$ is positive definite and all the inverse of matrices below are well defined.
Taking traces in \cref{eq:expandQ} yields
\ben{\label{eq:boundbyeps}
\frac1{n^2}\sum_{i=1}^n\E|X_i|^4
\le\frac{\tr Q}{2n}
\le\frac{\{\tr(Q^{1/2})\}^2}{2n}.
}

Because the $\mathcal{W}_4$ bound in \cref{prop:fourth} involves the sixth moments, we will first do a truncation. Let 
\be{
\varepsilon=n^{-1/4}\{\tr(Q^{1/2})\}^{1/2}
}
and
\be{
\wt W_n=\frac{1}{\sqrt{n}}\sum_{i=1}^n\wt X_i,\quad 
\wt X_i=U_i-\E U_i,\quad U_i=X_i\mathbf1_{\{|X_i|\le\sqrt n\,\varepsilon\}}.
}
Let $\wt X_i'$ and $U_i'$ be defined from $X_i'$ in the same way as $\wt X_i$ and $U_i$.
Let 
\be{
\wt Q=\frac1n\sum_{i=1}^n
\E\left[|\wt X_i'-\wt X_i|^2
(\wt X_i'-\wt X_i)^{\otimes2}\right].
}
In the next lemma, we bound $\wt Q$ by a constant multiple of $Q$.
\begin{lemma}\label{lem:qtildebound}
We have
\be{
\wt Q\preceq 4Q.
}
\end{lemma}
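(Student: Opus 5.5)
The plan is to prove the bound summand by summand and in quadratic-form sense. It suffices to show that, for each $i$ and each vector $v$,
\[
\E\bigl[|\wt X_i'-\wt X_i|^2\langle v,\wt X_i'-\wt X_i\rangle^2\bigr]\le 4\,\E\bigl[|X_i'-X_i|^2\langle v,X_i'-X_i\rangle^2\bigr].
\]
Averaging over $i$ then gives $v^\top\wt Q v\le 4v^\top Qv$, which is the claim. In the Hilbert-space setting the same quadratic-form argument applies verbatim.

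\textbf{Step 1 (the centering cancels).} Since $\E U_i=\E U_i'$, we have $\wt X_i'-\wt X_i=U_i'-U_i$. So no mean-correction terms appear, and we only need to compare $U_i'-U_i$ with $\Delta_i=X_i'-X_i$. Let $A=\{|X_i|\le\sqrt n\,\varepsilon\}$ and $A'=\{|X_i'|\le\sqrt n\,\varepsilon\}$, and write $h(x)=|x|^2\langle v,x\rangle^2\ge0$. The bound splits into three cases according to which variables survive the truncation.
\begin{itemize}
\item On $A\cap A'$, $U_i'-U_i=\Delta_i$.
\item On $A^c\cap A'^c$, $U_i'-U_i=0$.
\item On $A\cap A'^c$, $U_i'-U_i=-X_i$; symmetrically, on $A^c\cap A'$, $U_i'-U_i=X_i'$.
\end{itemize}
Using independence and exchangeability of $X_i$ and $X_i'$, this gives
\[
\E h(U_i'-U_i)=\E\bigl[h(\Delta_i)\mathbf 1_{A\cap A'}\bigr]+2\,\Prob(A'^c)\,\E\bigl[h(X_i)\mathbf 1_A\bigr]
\le \E h(\Delta_i)+2\,\E h(X_i).
\]

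\textbf{Step 2 (absorb the one-sided term).} Take the quadratic form of \cref{eq:expandQ} in direction $v$:
\[
\E h(\Delta_i)=2\E h(X_i)+2(\tr\Sigma_i)\,v^\top\Sigma_i v+4|\Sigma_i v|^2\ge 2\E h(X_i),
\]
because the last two terms are nonnegative. Hence $\E h(U_i'-U_i)\le 2\E h(\Delta_i)$. This yields $\wt Q\preceq 2Q$, and a fortiori $\wt Q\preceq 4Q$.

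The only delicate point is to avoid expanding $\wt X_i$ through \cref{eq:expandQ} directly. That route produces a term $4\wt\Sigma_i^2$, and $\wt\Sigma_i\preceq\Sigma_i$ does not imply $\wt\Sigma_i^2\preceq\Sigma_i^2$; it would instead require the cruder bound $\wt\Sigma_i^2\preceq(\tr\Sigma_i)\Sigma_i$. The observation $\wt X_i'-\wt X_i=U_i'-U_i$ sidesteps this entirely. After that, the proof is the case split above and the positivity of the extra terms in \cref{eq:expandQ}.
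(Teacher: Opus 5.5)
Your proof is correct, and it actually gives the sharper bound $\wt Q\preceq 2Q$. It starts from the same observation as the paper, $\wt X_i'-\wt X_i=U_i'-U_i$, and also finishes with \cref{eq:expandQ}, but the middle step is different.

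The paper uses the pointwise Loewner inequality $|u-v|^2(u-v)^{\otimes2}\preceq4(|u|^2+|v|^2)(u^{\otimes2}+v^{\otimes2})$ with $u=U_i'$, $v=U_i$. Taking expectations with independence gives $8\E[|U_i|^2U_i^{\otimes2}]+8\E|U_i|^2\,\E[U_i^{\otimes2}]$. Dropping the indicators bounds this by $8\E[|X_i|^2X_i^{\otimes2}]+8(\tr\Sigma_i)\Sigma_i$, which is at most four times \cref{eq:expandQ}.

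You instead split exactly over the four truncation events. This gives the identity $\E h(U_i'-U_i)=\E[h(\Delta_i)\mathbf 1_{A\cap A'}]+2\Prob(A'^c)\E[h(X_i)\mathbf 1_A]$, and you need \cref{eq:expandQ} only to absorb $2\E h(X_i)\le\E h(\Delta_i)$.

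What each route buys:
\begin{itemize}
\item Your route loses only the indicators and the factor $\Prob(A'^c)\le1$. The paper's pointwise bound throws away the $(u+v)$ contributions and the $16\Sigma_i^2$ term, which is why you get constant $2$ rather than $4$.
\item The paper's route never uses the hard-indicator structure. It needs only that $U_i$ is a pointwise shrinkage of $X_i$, so it would also cover a smooth truncation such as $U_i=X_i\min\{1,\sqrt n\,\varepsilon/|X_i|\}$. There your four-event decomposition is unavailable, because $U_i'-U_i$ is then none of $\Delta_i$, $0$, $-X_i$, $X_i'$.
\end{itemize}
Either constant suffices for \cref{lem:truncatedcomp}.
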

\begin{proof}[Proof of \cref{lem:qtildebound}]
The inequality
\bes{
|u-v|^2(u-v)^{\otimes2}\preceq& 2(|u|^2+|v|^2)(u-v)^{\otimes2}\\
\preceq& 2(|u|^2+|v|^2)[(u-v)^{\otimes2}+(u+v)^{\otimes2}]\\
\preceq&4(|u|^2+|v|^2)(u^{\otimes2}+v^{\otimes2})
}
and $\wt X_i'-\wt X_i=U_i'-U_i$ imply
\bes{
&\E\left[|\wt X_i'-\wt X_i|^2
(\wt X_i'-\wt X_i)^{\otimes2}\right]\\
 \preceq &4\E(|U_i|^2+|U_i'|^2)(U_i^{\otimes 2}+U_i'^{\otimes 2})\\
=&8\E\left[|U_i|^2U_i^{\otimes2}\right]
 +8\E|U_i|^2\,\E[U_i^{\otimes2}]\\
=&8\E\left[|X_i|^2X_i^{\otimes2}\mathbf1_{\{|X_i|\leq \sqrt{n} \varepsilon\}}\right]
 +8\E|X_i\mathbf1_{\{|X_i|\leq \sqrt{n} \varepsilon\}}|^2\,\E[X_i^{\otimes2}\mathbf1_{\{|X_i|\leq \sqrt{n} \varepsilon\}}]\\ 
\preceq&8\E\left[|X_i|^2X_i^{\otimes2}\right]
 +8(\tr\Sigma_i)\Sigma_i,
}
where we recall that $\Sigma_i=\Cov(X_i)$. 
Together with \cref{eq:expandQ}, we obtain
\be{
\E\left[|\wt X_i'-\wt X_i|^2
(\wt X_i'-\wt X_i)^{\otimes2}\right]\preceq4\E\left[|X_i'-X_i|^2
(X_i'-X_i)^{\otimes2}\right]
}
and $\wt Q\preceq4Q$. 
\end{proof}
 
In the next lemma, we compare $\wt W_n$ with the corresponding Gaussian vector.
\begin{lemma}\label{lem:truncatedcomp}
Let 
\be{
\wt \Sigma=\Cov(\wt W_n).
}
We have
\be{
\mathcal{W}_4(\mathcal{L}(\wt W_n), N(0,\wt \Sigma))\leq C\varepsilon.
}
\end{lemma}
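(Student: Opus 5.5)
The plan is to apply \cref{prop:fourth} with $p=4$ to the truncated sum $\wt W_n$. The only danger is that $\wt\Sigma^{-1}$ may be badly conditioned relative to $\wt Q$. I will remove this danger with the same splitting idea as in the proof of \cref{thm:fourth}, adapted to a non-commuting pair of matrices. Note first that $\wt X_i'-\wt X_i=U_i'-U_i$ satisfies $|\wt X_i'-\wt X_i|\le 2\sqrt n\,\varepsilon$. By \cref{lem:qtildebound}, $\tr(\wt Q^{1/2})\le 2\tr(Q^{1/2})=2\sqrt n\,\varepsilon^2$, using operator monotonicity of the square root. So it suffices to work with $\wt Q$ throughout and to prove a bound of order $\varepsilon$.

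\textbf{Splitting.} Let $P$ be the orthogonal projection onto the spectral subspace of the symmetric matrix $\wt\Sigma-n^{-1/2}\wt Q^{1/2}$ belonging to its nonnegative eigenvalues, and write $P^\perp=I-P$. Then
\[
P\wt\Sigma P\succeq n^{-1/2}P\wt Q^{1/2}P=:n^{-1/2}B
\qquad\text{and}\qquad
P^\perp\wt\Sigma P^\perp\preceq n^{-1/2}P^\perp\wt Q^{1/2}P^\perp .
\]

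\textbf{Coupling construction.} I first take an optimal $\mathcal W_4$ coupling of $P\wt W_n$ with $G_P\sim N(0,P\wt\Sigma P)$, and extend it to the full vector $\wt W_n$. Then, independently of everything else, I draw $G_{P^\perp}$ from the conditional Gaussian law of the $P^\perp$-part of $N(0,\wt\Sigma)$ given its $P$-part equal to $G_P$. This makes $G=G_P+G_{P^\perp}\sim N(0,\wt\Sigma)$, even though $\wt\Sigma$ need not be block diagonal. For the $P^\perp$-part I use only the triangle inequality $\|P^\perp\wt W_n-G_{P^\perp}\|_4\le\|P^\perp\wt W_n\|_4+\|G_{P^\perp}\|_4$.

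\textbf{The $P^\perp$-part.} Gaussian moments give $\|G_{P^\perp}\|_4^2\le C\tr(P^\perp\wt\Sigma P^\perp)$. The multidimensional Rosenthal inequality of \citet[Theorem~5.2]{Pinelis1994} gives
\[
\|P^\perp\wt W_n\|_4^2\le C\tr(P^\perp\wt\Sigma P^\perp)+C\Bigl(n^{-2}\sum_{i=1}^n\E|\wt X_i|^4\Bigr)^{1/2}.
\]
The last term is at most $C\varepsilon^2$ by \cref{eq:boundbyeps}, after comparing fourth moments of $\wt X_i$ and $X_i$. Moreover, $\tr(P^\perp\wt\Sigma P^\perp)\le n^{-1/2}\tr(\wt Q^{1/2})\le C\varepsilon^2$.

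\textbf{The $P$-part.} I apply \cref{prop:fourth} with $p=4$ to $P\wt X_i$, whose covariance is $P\wt\Sigma P$. The key computation is the following. Set $R=\wt Q^{1/4}P$, so that $B=R^\top R$ and $P\wt QP=R^\top\wt Q^{1/2}R$. With $\Pi$ the projection onto the range of $R$, this gives
\[
\tr\bigl((P\wt\Sigma P)^\dagger P\wt QP\bigr)\le\sqrt n\,\tr\bigl(B^\dagger R^\top\wt Q^{1/2}R\bigr)=\sqrt n\,\tr(\Pi\wt Q^{1/2})\le\sqrt n\,\tr(\wt Q^{1/2}).
\]
The weight $|P\Delta|^2\le|\Delta|^2$ only helps, so the $p=2$-type term in \cref{conj} is at most $C n^{-1/2}(n^{-1/2}\tr\wt Q^{1/2})^{1/2}\le C\varepsilon$. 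For the $p=4$ term, I use $|\Delta_i|^4\le 4n\varepsilon^2|\Delta_i|^2$ to reduce it to the same trace. This gives
\[
n^{-1/2}\bigl(4n\varepsilon^2\cdot\sqrt n\,\tr\wt Q^{1/2}\bigr)^{1/4}\le C n^{-1/2}\bigl(n\varepsilon^2\cdot n\varepsilon^2\bigr)^{1/4}=C\varepsilon .
\]
Summing the two parts proves the lemma.

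The main obstacle I expect is exactly this non-commutativity: neither $\wt\Sigma^{-1}\wt Q$ nor a coordinatewise minimum in one fixed basis is controlled by $\tr(\wt Q^{1/2})$. The approach above handles it in two places: the conditional-Gaussian extension handles the off-diagonal blocks of $\wt\Sigma$, and the trace identity with $R=\wt Q^{1/4}P$ avoids any need for monotonicity of $(P\wt QP)^{1/2}$. If the pseudoinverse bound $(P\wt\Sigma P)^\dagger\preceq\sqrt n\,B^\dagger$ needs care, because the ranges of $P\wt\Sigma P$ and $B$ may differ, I would first restrict to the range of $\wt Q$, where every $\wt X_i$ lives by the argument after \cref{eq:expandQ}.
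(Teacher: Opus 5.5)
Your argument is correct, but it takes a different route from the paper's.

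\textbf{The paper's route.} The paper does not split the space. It regularizes by Gaussian smoothing: it adds an independent $Z\sim N(0,A)$ with $A=n^{-1/2}Q^{1/2}$. To stay inside the independent-sum framework of \cref{prop:fourth}, $Z$ is realized as a sum of $K$ independent $N(0,A/K)$ summands, and then $K\to\infty$. The covariance becomes $M=\wt\Sigma+A\succeq A$. The Loewner bound $M^{-1}\preceq A^{-1}$ together with \cref{lem:qtildebound} then gives $\tr(M^{-1}\wt Q)\le 4\sqrt n\,\tr(Q^{1/2})$ in one line. The smoothing is removed at cost $2\|Z\|_4\le C\varepsilon$. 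This route needs no gluing and no pseudoinverse bookkeeping.

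\textbf{Your route.} Your hard spectral cutoff is a basis-free analogue of the coordinatewise $\min$ in the proof of \cref{thm:fourth}. It avoids the auxiliary Gaussian summands and the limiting argument, and it applies \cref{prop:fourth} to an honest projected sum. The price is several extra ingredients:
\begin{itemize}
\item the conditional-Gaussian gluing, needed because $P$ does not commute with $\wt\Sigma$;
\item a separate fourth-moment (Rosenthal) bound for the discarded part $P^\perp\wt W_n$;
\item the identity $R(R^\top R)^\dagger R^\top=\Pi$ together with a pseudoinverse comparison.
\end{itemize}

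\textbf{The pseudoinverse comparison.} The worry you flagged is harmless. The full Loewner inequality $(P\wt\Sigma P)^\dagger\preceq\sqrt n\,B^\dagger$ can fail when the ranges differ, but you never need it. The range of $P\wt QP=R^\top\wt Q^{1/2}R$ lies in $\operatorname{ran}B=\operatorname{ran}R^\top$. Also $\operatorname{ran}B\subseteq\operatorname{ran}(P\wt\Sigma P)$, because $P\wt\Sigma P\succeq n^{-1/2}B$. For $v\in\operatorname{ran}B$, the variational formula $v^\top X^\dagger v=\sup_w(2w^\top v-w^\top Xw)$ gives $v^\top(P\wt\Sigma P)^\dagger v\le\sqrt n\,v^\top B^\dagger v$. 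That is exactly what the trace bound requires. Your fallback of restricting to $\operatorname{ran}\wt Q$ also works, since $\wt Q\succeq 4\wt\Sigma_i^2/n$ for each $i$, and on that subspace $B$ is invertible on $\operatorname{ran}P$.

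\textbf{A small slip.} The $p=2$-type term is $Cn^{-1/2}(\sqrt n\,\tr\wt Q^{1/2})^{1/2}=C(n^{-1/2}\tr\wt Q^{1/2})^{1/2}$, not $Cn^{-1/2}(n^{-1/2}\tr\wt Q^{1/2})^{1/2}$. The conclusion $\le C\varepsilon$ is unaffected.
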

\begin{proof}[Proof of \cref{lem:truncatedcomp}]
Because the error bound in \cref{prop:fourth} involves the inverse of the covariance matrix, we will do a smoothing to get rid of it.
Let
\be{
A=n^{-1/2}Q^{1/2},\quad M=\wt \Sigma+A, \quad \delta_i=\frac{\wt X_i'-\wt X_i}{\sqrt n}.
}
Take $Z\sim N(0,A)$ as the sum of $K$
independent $N(0,A/K)$ vectors, jointly independent of the
$\wt X_i$'s. Applying \cref{prop:fourth} with $p=4$ and letting $K\to\infty$ give
\bes{
&\mathcal W_4\bigl(\mathcal L(\wt W_n+Z),N(0,M)\bigr)\\
\le& C\left(\sum_{i=1}^n
\E[|\delta_i|^2\delta_i^\top M^{-1}\delta_i]\right)^{1/2}
+C\left(\sum_{i=1}^n
\E[|\delta_i|^4\delta_i^\top M^{-1}\delta_i]\right)^{1/4}.
}
Since $M\succeq A$, we have $M^{-1}\preceq A^{-1}$ and, together with \cref{lem:qtildebound}, we obtain
\bes{
\sum_{i=1}^n\E[|\delta_i|^2\delta_i^\top M^{-1}\delta_i]
&=\frac1n\tr(M^{-1}\wt Q)\le\frac4n\tr(A^{-1}Q)
=\frac4{\sqrt n}\tr(Q^{1/2})
=4\varepsilon^2.
}
Furthermore, $|\delta_i|\le2\varepsilon$, so
\bes{
\sum_{i=1}^n\E[|\delta_i|^4\delta_i^\top M^{-1}\delta_i]
&\le4\varepsilon^2\sum_{i=1}^n
\E[|\delta_i|^2\delta_i^\top M^{-1}\delta_i]\le16\varepsilon^4.
}
It follows that
\be{
\mathcal W_4\bigl(\mathcal L(\wt W_n+Z),N(0,M)\bigr)
\le C\varepsilon.
}
Also, for any positive semidefinite matrix $A$ and $Z_A\sim N(0,A)$,
\ben{\label{eq:gaussfourth}
\E|Z_A|^4=(\tr A)^2+2\tr(A^2)\le3(\tr A)^2.
}
Therefore,
$\|Z\|_4\le 3^{1/4}\varepsilon$. Removing the smoothing Gaussian from
both laws and using the triangle inequality now gives the desired bound
\be{
\mathcal W_4\bigl(\mathcal L(\wt W_n),N(0,\wt\Sigma)\bigr)
\le C\varepsilon.
}
\end{proof}

We are only left to bound $\mathcal W_4\bigl(\mathcal L(\wt W_n), \mathcal{L}(W_n)\bigr)$ and $\mathcal W_4\bigl(N(0,\wt\Sigma) ,N(0,\Sigma)\bigr)$.

Denoting $V_i=X_i\mathbf 1_{\{|X_i|>\sqrt{n} \varepsilon\}}-\E X_i\mathbf 1_{\{|X_i|>\sqrt{n} \varepsilon\}}$ which has mean zero and is independent across $i$, we have
\bes{
\E|W_n-\wt W_n|^4&=\frac{1}{n^2}\E |\sum_{i=1}^n V_i|^4=\frac{1}{n^2} \E\Big(\sum_{i=1}^n |V_i|^2 +2\sum_{i<j} \langle V_i, V_j \rangle \Big)^2\\
&=\frac{1}{n^2} \Big(\sum_{i=1}^n \E |V_i|^4 +2 \sum_{i<j} \E |V_i|^2|V_j|^2 + 4 \sum_{i<j} \E\langle V_i, V_j \rangle^2 \Big)\\
&\leq \frac{1}{n^2} \Big(\sum_{i=1}^n \E |V_i|^4 +6 \sum_{i<j} \E |V_i|^2|V_j|^2 \Big)\\
&\leq \frac{1}{n^2} \Big(\sum_{i=1}^n \E |V_i|^4 +3 (\sum_{i=1}^n \E |V_i|^2)^2 \Big).
}
Using
$\E|Y-\E Y|^4\le16\E|Y|^4$, we obtain
\besn{\label{eq:truncationerror}
\E|W_n-\wt W_n|^4
\le\frac{16}{n^2}\sum_{i=1}^n\E|X_i|^4+3\left\{\frac1n\sum_{i=1}^n
\E\left[|X_i|^2\mathbf1_{\{|X_i|>\sqrt n\,\varepsilon\}}\right]
\right\}^2
\le C\varepsilon^4,
}
where we used \cref{eq:boundbyeps} in the last inequality.
For comparing $N(0,\wt\Sigma)$ and $N(0,\Sigma)$, observe that
\be{
\Sigma-\wt\Sigma
=\frac1n\sum_{i=1}^n\left\{
\E\left[X_i^{\otimes2}\mathbf1_{\{|X_i|>\sqrt n\,\varepsilon\}}\right]
+(\E U_i)^{\otimes2}\right\}\succeq0.
}
Since $\E U_i=-\E[X_i\mathbf1_{\{|X_i|>\sqrt n\,\varepsilon\}}]$,
Jensen's inequality gives
$\tr(\Sigma-\wt\Sigma)\le\varepsilon^2$.
Consequently, coupling the two Gaussian laws by adding an
independent $N(0,\Sigma-\wt\Sigma)$ vector and using \cref{eq:gaussfourth} give
\ben{\label{eq:truncationerror2}
\mathcal W_4\bigl(N(0,\wt\Sigma),N(0,\Sigma)\bigr)
\le3^{1/4}\{\tr(\Sigma-\wt\Sigma)\}^{1/2}
\le3^{1/4}\varepsilon.
}
Combining \cref{lem:truncatedcomp} with \cref{eq:truncationerror} and \cref{eq:truncationerror2} proves
\be{
\mathcal W_4\bigl(\mathcal L(W_n),N(0,\Sigma)\bigr)
\le C\varepsilon
=\frac{C\{\tr(Q^{1/2})\}^{1/2}}{n^{1/4}}.
}

\paragraph{Hilbert space CLT bound.} Let $P_m$ be the projection onto the span of the first $m$ vectors of an orthonormal basis, with
$P_m\to I$ strongly. The averaged fourth-moment operator of
$P_mX_i$ is
\[
Q_m=\frac1n\sum_{i=1}^n\E\left[
|P_m(X_i'-X_i)|^2(P_m(X_i'-X_i))^{\otimes2}
\right]\preceq P_mQP_m.
\]
Using the operator monotonicity of the square root in the first inequality and the ideal
property of the trace norm in the second inequality below, we have
\[
\begin{aligned}
\tr(Q_m^{1/2})
&\le \tr((P_mQP_m)^{1/2})=\|Q^{1/2}P_m\|_1
\le \|Q^{1/2}\|_1
=\tr(Q^{1/2}),
\end{aligned}
\]
where $\|\cdot\|_1$ denotes the trace norm.
Applying the finite-dimensional result to $P_mX_i$
therefore gives the same bound uniformly in $m$.
Finally, $P_mW_n\to W_n$ and
$N(0,P_m\Sigma P_m)\to N(0,\Sigma)$ in $\mathcal W_4$.
Letting $m\to\infty$ completes the proof.
\end{proof}

Next, we give an example supporting the claim in \cref{rem:ab}(b).

\begin{example}
Recall the setting of \cref{rem:ab}(b). The example below is adapted from 
\cite[Theorem~7 and Example~5]{GZ2011}, who considered strong
Gaussian approximation.

Let $(e_j)_{j\ge1}$ be the standard orthonormal basis of $\ell^2$, and let
$(p_j)_{j\ge1}$ be positive numbers with $\sum_jp_j=1$. Define
\[
 \mathbb P(X=e_j)=\mathbb P(X=-e_j)=\frac{p_j}{2},
 \qquad j\ge1.
\]
Then $|X|=1$, $\mathbb EX=0$, and $\Sigma e_j=p_je_j$.
Since $W_{n,j}\in n^{-1/2}\mathbb Z$, every coupling of $W_n$ and
$G\sim N(0,\Sigma)$ satisfies a coordinatewise lattice lower bound.
Consequently, with $Z\sim N(0,1)$,
\ben{\label{eq:dn2}
 \begin{aligned}
 d_n^2
 &\ge \sum_{j\ge1}\mathbb E\operatorname{dist}
       \bigl(\sqrt{p_j}Z,n^{-1/2}\mathbb Z\bigr)^2\ge c_0\sum_{j:\,np_j\le1}p_j,
 \qquad
 c_0:=\mathbb E\bigl[Z^2\mathbf1_{\{|Z|\le1/2\}}\bigr]>0.
 \end{aligned}
}
Indeed, if $np_j\le1$ and $|Z|\le1/2$, the nearest lattice point to
$\sqrt{p_j}Z$ is zero. Hence the variance carried by coordinates with
$p_j\le1/n$ cannot yet be approximated at a smaller relative cost.

To obtain the asserted arbitrary slowness, choose strictly increasing
integers $N_k$ such that
\[
 N_k\ge4^k,\qquad a_{N_k}\le2^{-k},\qquad k\ge1.
\]
Partition the coordinates into disjoint finite blocks $B_k$, with
\[
 |B_k|=m_k:=\bigl\lceil2^{-k}N_k\bigr\rceil,
 \qquad p_j=\frac{2^{-k}}{m_k}\quad(j\in B_k).
\]
The resulting probabilities sum to one, and $p_j\le1/N_k$ on $B_k$.
If $N_k\le n<N_{k+1}$, every coordinate in $B_{k+1},B_{k+2},\ldots$
has $np_j\le1$. Therefore, from \cref{eq:dn2},
\[
 d_n^2\ge c_0\sum_{\ell\ge k+1}2^{-\ell}=c_0\,2^{-k},
 \qquad
 \frac{d_n}{a_n}\ge\sqrt{c_0}\,2^{k/2}\longrightarrow\infty.
\]
Nevertheless, $d_n\to0$. To see this, let
$P_m$ be the projection onto the first $m$ coordinates. The finite-dimensional
CLT and convergence of second moments give
$$\mathcal W_2(\mathcal L(P_mW_n),\mathcal L(P_mG))\to0$$ for each fixed
$m$, whereas
\[
 \mathbb E|(I-P_m)W_n|^2
 =\mathbb E|(I-P_m)G|^2
 =\sum_{j>m}p_j
\]
uniformly in $n$. The triangle inequality, first letting $n\to\infty$
and then $m\to\infty$, proves the claim.
\end{example}

Next, we prove the strong approximation result, \cref{thm:strong}. The basic coupling argument is from the KMT embedding theorem (see  \cite[Section~3]{komlos1975approximation} and \cite[Section~4]{komlos1976approximation}), and the difference is the way we handle the coupling errors. See \cite{Z2002} and the references therein for a survey of KMT approximation results for sums of independent, non-identically distributed random variables and random vectors.
For extensions of the KMT method to the dependent case, see, for example, \cite{chatterjee2012new}, \cite{bhattacharjee2016strong}, \cite{dimitrov2021kmt} and \cite{ye2025computable}.

\begin{proof}[Proof of \cref{thm:strong}]
We first describe a dyadic construction based on the KMT method. Next, we use \cref{cor:Hilbert} to bound the fourth moments of the
coupling errors introduced by random halving.
Finally, we express the partial sum errors in terms of these
coupling errors and sum the resulting bounds over the dyadic levels.

In this proof, we use $C$ to denote universal constants, which may differ at each appearance.
We may assume
that $0<\tr(Q^{1/2})<\infty$; the other cases are
immediate. In particular, conditional Jensen's inequality gives
$\E|X_1|^4\le\E|X_1'-X_1|^4=\tr Q<\infty$.
We first suppose that $n=2^J\ge2$ for an integer $J$. General $n\geq 2$ will be considered at the end of this proof.

\medskip
\noindent\textbf{Step 1: Dyadic construction.}
We first describe the dyadic construction used in the KMT embedding. Start with a labeled i.i.d. sample  
$\widetilde X_1,\ldots,\widetilde X_n$. For the block $I=I_L\cup I_R$, where $I_L=\{1,\dots, 2^{J-1}\}$ and $I_R=\{2^{J-1}+1,\dots, 2^J\}$, we rearrange the $\widetilde X_i$'s in the following way. Starting with the pair $\{\wt X_1, \wt X_2\}$, we flip a fair coin to determine which $\wt X_i$ goes to position 1 (the first position in $I_L$), with the other going to position $2^{J-1}+1$ (the first position in $I_R$). Mathematically, this is achieved by using a Rademacher random variable $\eta_1$ with $\mathbb P(\eta_1=\pm 1)=1/2$: if $\eta_1=1$, $\wt X_1$ goes to position 1, and if $\eta_1=-1$, $\wt X_2$ goes to position 1. Repeat this procedure for all the other consecutive pairs of the $\wt X_i$'s using independent $\eta_2, \dots, \eta_{2^{J-1}}$. Finally, repeat the above procedure for each resulting sub-block $I_L, I_R, \dots$ until there is only one element left in each block.
At each split, conditionally on the original sample and all variables generated before that split, the sign vector has the product Rademacher law. It follows that the label rearrangement is independent of the original sample; hence the resulting sequence, denoted by $\{X_1,\dots, X_n\}$, is still i.i.d.\ with the original marginal distribution.

Next, we create a Gaussian process with the desired marginal distribution, coupled with the above procedure. First generate $Z_n\sim N(0,n\Sigma)$ (to be coupled in \cref{eq:strong-root-coupling} with $\sum_{i=1}^n\wt X_i$). Next, given $Z_n$ and $\wt X_1,\dots,\wt X_n$, jointly generate the signs $\eta_1,\dots,\eta_{2^{J-1}}$ and the bridge $B_n$, with conditional marginal laws equal to the product Rademacher law and $N(0,n\Sigma)$, respectively, so that $B_n$ is close to
\[
\sum_{i=1}^{2^{J-1}}\eta_i(\wt X_{2i-1}-\wt X_{2i})
\]
in the sense of \cref{eq:strong-bridge-fourth}. 
Since the conditional distribution of $B_n$ given $Z_n$ and the $\wt X_i$'s is the fixed law $N(0,n\Sigma)$, $B_n$ is independent of $\{Z_n,\wt X_1,\dots,\wt X_n\}$.
Define $Z_{n/2}:=(Z_n+B_n)/2$. Finally, repeat the above procedure for $(I_L,Z_{n/2})$, $(I_R,Z_n-Z_{n/2})$, $\dots$ until there is only one element left in each block. Because the bridge vectors are mutually independent and independent of $Z_n$, the resulting $Z$ process has the same distribution as the partial sum process $\sum_{i=1}^kG_i$, $k=1,\dots,n$, where the $G_i$'s are i.i.d.\ with distribution $N(0,\Sigma)$. The difference between the two partial sum processes $\sum_{i=1}^kX_i$ and $\sum_{i=1}^kG_i$ satisfies the standard dyadic interpolation identity; see \cref{eq:strong-hat-expansion}.

\medskip
\noindent\textbf{Step 2: Fourth moments of the coupling errors.}
By \cref{cor:Hilbert}, the total sum $\sum_{i=1}^n \wt X_i$ can be coupled with
$Z_{n}\sim N(0,n\Sigma)$ so that
\begin{equation}\label{eq:strong-root-coupling}
 \left\|\sum_{i=1}^n\widetilde X_i-Z_{n}\right\|_4
 \le Cn^{1/4}\{\tr(Q^{1/2})\}^{1/2}.
\end{equation}

Now consider the coupling for a dyadic block $I$ with $2m$ elements, denoted before reordering by $\{x_1, x_1', \dots, x_m, x_m'\}$. These $x$'s are fixed for now. We will take expectations with respect to their randomness at the end of this step. After reordering by independent Rademacher random variables $\eta^I_1, \dots, \eta^I_m$ (they are independent across $I$), the difference between the left and right half-block sums is
\be{
D_I:=\sum_{i=1}^m \eta_i^I (x_i-x_i').
}
In the next lemma, we will prove that the Gaussian vector $B_I\sim N(0, 2m\Sigma)$ can be constructed to satisfy the following fourth-moment error bound.
\begin{lemma}\label{lem:eIbound}
Write $\eta_i:=\eta_i^I$, $a_i:=x_i-x_i'$ and
\[
 \Gamma=\sum_{i=1}^m a_i^{\otimes2},
 \qquad
 R=\sum_{i=1}^m |a_i|^2a_i^{\otimes2}.
\]
We can construct $B_I$ such that
\begin{equation}\label{eq:strong-conditional-fourth}
\begin{aligned}
 \E[|D_I-B_I|^4]
 &=\mathcal W_4^4\bigl(
   \mathcal L(D_I),N(0,2m\Sigma)\bigr)\le C\left[
   \{\tr(R^{1/2})\}^2+\|\Gamma-2m\Sigma\|_1^2
 \right],
\end{aligned}
\end{equation}
where $\|\cdot\|_1$ denotes the trace norm (the sum of the singular values).
\end{lemma}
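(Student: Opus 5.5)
The plan is to split the target Gaussian: first compare $D_I$ with $N(0,\Gamma)$, then compare $N(0,\Gamma)$ with $N(0,2m\Sigma)$. The equality in \eqref{eq:strong-conditional-fourth} is simply a choice. Conditionally on the $x$'s, we take $(D_I,B_I)$ to be an optimal $\mathcal W_4$ coupling of $\mathcal L(D_I)$ and $N(0,2m\Sigma)$; measurability in the $x$'s follows from a measurable selection of optimal couplings. So only the inequality needs proof, and by the triangle inequality
\[
\mathcal W_4\bigl(\mathcal L(D_I),N(0,2m\Sigma)\bigr)\le \mathcal W_4\bigl(\mathcal L(D_I),N(0,\Gamma)\bigr)+\mathcal W_4\bigl(N(0,\Gamma),N(0,2m\Sigma)\bigr).
\]

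For the first term, $D_I=\sum_{i=1}^m\eta_ia_i$ is a sum of independent, mean-zero, symmetric vectors with $\Cov(D_I)=\Gamma$. We apply \cref{cor:Hilbert} with $n=m$ and summands $Y_i=\sqrt m\,\eta_ia_i$, so that $W_m=\sum_i Y_i/\sqrt m=D_I$ and the covariance is $\Gamma$. Since $Y_i'-Y_i=\sqrt m(\eta_i'-\eta_i)a_i$ and $\eta_i'-\eta_i\in\{0,\pm2\}$ with $\E(\eta_i'-\eta_i)^4=8$, the operator in \cref{cor:Hilbert} is
\[
Q_D=\frac1m\sum_{i=1}^m m^2\cdot 8\,|a_i|^2a_i^{\otimes2}=8mR .
\]
The bound then reads
\[
\mathcal W_4\le C\,m^{-1/4}\{\tr((8mR)^{1/2})\}^{1/2}=C'\{\tr(R^{1/2})\}^{1/2},
\]
so after taking fourth powers the first term contributes $C\{\tr(R^{1/2})\}^2$. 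The summands are non-identically distributed, but \cref{cor:Hilbert} allows this.

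For the second term, we need a Gaussian comparison in $\mathcal W_4$ controlled by the trace norm of the difference of covariances. Write $\Gamma-2m\Sigma=P-N$ with $P,N\succeq0$, so that $\tr P+\tr N=\|\Gamma-2m\Sigma\|_1$. Set $S=\Gamma+N=2m\Sigma+P$. Adding an independent $N(0,N)$ vector to a sample from $N(0,\Gamma)$ gives $N(0,S)$, and by \eqref{eq:gaussfourth} this costs $\mathcal W_4\le3^{1/4}(\tr N)^{1/2}$. Similarly, $\mathcal W_4(N(0,2m\Sigma),N(0,S))\le 3^{1/4}(\tr P)^{1/2}$. Combining the two gives
\[
\mathcal W_4\bigl(N(0,\Gamma),N(0,2m\Sigma)\bigr)\le C\|\Gamma-2m\Sigma\|_1^{1/2},
\]
which after taking fourth powers is $C\|\Gamma-2m\Sigma\|_1^2$.

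Adding the two pieces via $(a+b)^4\le8(a^4+b^4)$ yields \eqref{eq:strong-conditional-fourth}. The only delicate point I expect is applying \cref{cor:Hilbert} with the rescaling $Y_i=\sqrt m\,\eta_ia_i$ and checking that $Q_D$ matches $R$ up to the factor $8m$. Degenerate cases, such as $\Gamma$ singular or $R=0$, are already handled there because that result works on the range of $Q$. The rest is routine.
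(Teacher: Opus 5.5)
Your proposal is correct and follows the paper's proof. You compare $D_I$ with $N(0,\Gamma)$ via \cref{cor:Hilbert}; your rescaled summands $\sqrt m\,\eta_i a_i$ with operator $8mR$ are equivalent to the paper's summands $\eta_i a_i$ with operator $8R/m$ followed by scaling by $\sqrt m$. You then compare $N(0,\Gamma)$ with $N(0,2m\Sigma)$ by adding independent Gaussians whose covariances are the negative and positive parts of $\Gamma-2m\Sigma$, exactly as the paper does, with the equality being just the choice of an optimal coupling given the $x$'s.
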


\begin{proof}[Proof of \cref{lem:eIbound}]

We first compare $\mathcal L(D_I)$ with
$N(0,\Gamma)$.
Let \((\eta_1',\ldots,\eta_m')\) be an independent copy of \((\eta_1,\ldots,\eta_m)\).
To apply \cref{cor:Hilbert}, we first compute that
\be{
\frac1m\sum_{i=1}^m
\E\left[
|(\eta_i'-\eta_i)a_i|^2
\{(\eta_i'-\eta_i)a_i\}^{\otimes2}
\right]=\frac8mR.
}
From \cref{cor:Hilbert}, we obtain
\begin{equation}\label{eq:strong-rademacher-fourth}
\begin{aligned}
\mathcal W_4\bigl(\mathcal L(D_I),N(0,\Gamma)\bigr)
&\le C\sqrt m\,m^{-1/4}
\left\{\tr\left[\left(\frac8mR\right)^{1/2}\right]\right\}^{1/2}\le C\{\tr(R^{1/2})\}^{1/2}.
\end{aligned}
\end{equation}

Next, we compare $N(0, \Gamma)$ with $N(0, 2m\Sigma)$. Add an independent $N(0,(\Gamma-2m\Sigma)_-)$ vector
to an $N(0,\Gamma)$ vector, and an independent
$N(0,(\Gamma-2m\Sigma)_+)$ vector to an $N(0,2m\Sigma)$ vector.
Here $(\cdot)_+$ and $(\cdot)_-$ denote the positive and
negative parts of a symmetric matrix.
The two sums have the same Gaussian law because
\[
 \Gamma+(\Gamma-2m\Sigma)_-
 =2m\Sigma+(\Gamma-2m\Sigma)_+.
\]
We also have
\[
 \tr((\Gamma-2m\Sigma)_-)
 +\tr((\Gamma-2m\Sigma)_+)
 =\|\Gamma-2m\Sigma\|_1.
\]
The Gaussian fourth-moment bound \cref{eq:gaussfourth} and the triangle inequality
therefore give
\begin{equation}\label{eq:strong-gaussian-fourth}
 \mathcal W_4\bigl(N(0,\Gamma),N(0,2m\Sigma)\bigr)
 \le C\|\Gamma-2m\Sigma\|_1^{1/2}.
\end{equation}
Combining \eqref{eq:strong-rademacher-fourth} and
\eqref{eq:strong-gaussian-fourth} gives \cref{eq:strong-conditional-fourth}.
\end{proof}

We now average the bound \cref{eq:strong-conditional-fourth} with respect to the randomness of the $x$'s in the block $I$.
As argued in Step~1, the label assignments are independent
of the original sample. Thus the $a_i$ are unconditionally
independent copies of $X_1-X_1'$. Since
$\E a_i^{\otimes2}=2\Sigma$ and
$(a_i^{\otimes2})^2=|a_i|^2a_i^{\otimes2}$, independence between the $a_i$'s gives
\ben{\label{eq:momentidentity}
 \E R=mQ,
 \qquad
 \E(\Gamma-2m\Sigma)^2
 =m(Q-4\Sigma^2)\preceq mQ.
}
It can be checked that the range of $Q$ contains the $a_i$'s almost surely and contains the range of $\Sigma$.
As in the proof of \cref{cor:Hilbert}, we work on the subspace of the range of $Q$, on which $Q$ is positive definite and all the inverse of matrices below are well defined.
The Cauchy--Schwarz inequality and the 
moment identities \cref{eq:momentidentity} give
\begin{align*}
 \E\{\tr(R^{1/2})\}^2
 &\le \tr(Q^{1/2})\tr\bigl((\E R)Q^{-1/2}\bigr)
 =m\{\tr(Q^{1/2})\}^2,\\
 \E\|\Gamma-2m\Sigma\|_1^2
 &\le \tr(Q^{1/2})
       \tr\bigl(\E[(\Gamma-2m\Sigma)^2]Q^{-1/2}\bigr)\le m\{\tr(Q^{1/2})\}^2.
\end{align*}
Taking expectations in \eqref{eq:strong-conditional-fourth}
therefore gives
\begin{equation}\label{eq:strong-bridge-fourth}
 \E|D_I-B_I|^4\le Cm\{\tr(Q^{1/2})\}^2,
 \qquad |I|=2m.
\end{equation}

\medskip
\noindent\textbf{Step 3: Summation over dyadic levels.}
Let $e_0=\sum_{i=1}^n X_i-Z_n$ and $e_I:=D_I-B_I$ for each dyadic block.
By the standard dyadic interpolation identity, the partial sum errors admit the decomposition
\begin{equation}\label{eq:strong-hat-expansion}
 \sum_{i=1}^k(X_i-G_i)
 =\frac{k}{n}e_0+\frac12\sum_I\phi_I(k)e_I,
 \qquad k=0,\ldots,n,
\end{equation}
where the sum is over all dyadic blocks in $(0,n]$ with
$|I|\ge2$, and, for each dyadic block $I=(\ell,r]$,
$\phi_I$ is the continuous hat function supported on $[\ell,r]$,
linear on each half, equal to one at the midpoint and zero
at the endpoints.


We next use the fourth-moment bound from Step~2 to control
the largest coupling error at each level.
A level with block size $2m$ contains $n/(2m)$ blocks.
By \eqref{eq:strong-bridge-fourth},
\begin{equation}\label{eq:strong-level-fourth}
\begin{aligned}
&\left\|\max_{|I|=2m}|e_I|\right\|_4\le \left(\sum_{|I|=2m}\E|e_I|^4\right)^{1/4}\\
 &\le \left(
       \frac{n}{2m}\,Cm\{\tr(Q^{1/2})\}^2
       \right)^{1/4}\le Cn^{1/4}\{\tr(Q^{1/2})\}^{1/2}.
\end{aligned}
\end{equation}
For each $k$, at most one hat function per level is nonzero,
and $0\le\phi_I(k)\le1$. Thus the contribution of each level
in \eqref{eq:strong-hat-expansion} is bounded by half the
largest $|e_I|$ on that level.
Taking the maximum over $k$ and applying Minkowski's inequality,
\eqref{eq:strong-root-coupling}, and
\eqref{eq:strong-level-fourth}, we obtain
\[
\begin{aligned}
 \left\|\max_{1\le k\le n}
 \left|\sum_{i=1}^k(X_i-G_i)\right|\right\|_4
 &\le \|e_0\|_4
 +\frac12\sum_{j=1}^J
 \left\|\max_{|I|=2^j}|e_I|\right\|_4\\
&\le Cn^{1/4}\{\tr(Q^{1/2})\}^{1/2}(1+J).
\end{aligned}
\]
Since $J=\log_2 n$, this proves
\cref{eq:thmstrong} for dyadic $n$.

\paragraph{General $n\ge2$.}
For general $n\ge2$, choose $N=2^{\lceil\log_2 n\rceil}$, so that $n\le N<2n$. Applying the dyadic construction to $N$ i.i.d. increments with the same law as $X_1$, we have
\bes{
&\E\max_{1\le k\le n}\Big|\frac1{\sqrt n}\sum_{i=1}^k(X_i-G_i)\Big|^4\\
\leq&\frac{N^2}{n^2}\,\E\max_{1\le k\le N}\Big|\frac1{\sqrt N}\sum_{i=1}^k(X_i-G_i)\Big|^4\\
\leq&\frac Nn\,\frac{C[\tr(Q^{1/2})]^2}{n}(1+\log N)^4\\
\leq&\frac{C[\tr(Q^{1/2})]^2}{n}(1+\log n)^4.
}
\end{proof}

Finally, we give an example to show the near optimality of the error rate in \cref{eq:thmstrong}.

\begin{example}\label{exam:strong}
Let the dimension $d=1$.
For each $n\ge2$, put
\[
 a_n=n^{1/4},\qquad
 b_n=\left(\frac{\sqrt n}{\sqrt n+1}\right)^{1/2},
\]
and let $X$ have the symmetric distribution
\[
 \Prob(X=a_n)=\Prob(X=-a_n)=\frac1{2n},\qquad
 \Prob(X=b_n)=\Prob(X=-b_n)=\frac{1-1/n}{2}.
\]
Then
\[
 \E X=0,\qquad
 \E|X|^2=1,\qquad
 \E|X|^4=\frac{2\sqrt n}{\sqrt n+1}<2.
\]
For an independent copy $X^{\prime}$, 
\[
 Q=\E|X^{\prime}-X|^4
 =2\E|X|^4+6
 =10-\frac4{\sqrt n+1}<10.
\]

Consider any coupling of i.i.d.\ copies $X_1,\ldots,X_n$ of
$X$ and mutually independent standard Gaussian variables
$G_1,\ldots,G_n$, and write
\[
 \Delta_n=\max_{1\le k\le n}
 \left|\sum_{i=1}^k(X_i-G_i)\right|.
\]
As observed in
\cite[Theorem~3 and its proof, equation~(88)]{Sakhanenko2006}, we have
\[
 \max_{1\le i\le n}|X_i|
 \le2\Delta_n+\max_{1\le i\le n}|G_i|.
\]
It follows that
\begin{align*}
 \Prob(\Delta_n\ge a_n/4)
 &\ge1-(1-1/n)^n
 -\Prob\left(\max_{1\le i\le n}|G_i|>a_n/2\right)\\
 &\ge1-e^{-1}-2n\exp(-\sqrt n/8)
 \ge\frac12
\end{align*}
for all sufficiently large $n$. Consequently, every such
coupling satisfies
\begin{equation*}
 \Big(\E\max_{1\le k\le n}
 \Big|\frac1{\sqrt n}\sum_{i=1}^k(X_i-G_i)\Big|^4\Big)^{1/2}\ge \E\max_{1\le k\le n}
 \Big|\frac1{\sqrt n}\sum_{i=1}^k(X_i-G_i)\Big|^2
 \ge\frac1{32\sqrt n}
 \ge\frac{\tr(Q^{1/2})}{32\sqrt{10}\sqrt n}.
\end{equation*}
This proves the optimality of the error rate up to the
logarithmic factor in \cref{eq:thmstrong}.  
\end{example}

\begingroup
\interlinepenalty=10000
\raggedright
\bibliographystyle{plainnat}

\endgroup
\end{document}